\newtheorem{theorem}{Theorem}[section]
\newtheorem{proposition}[theorem]{Proposition}
\newtheorem{lemma}[theorem]{Lemma}
\newtheorem{remark}[theorem]{Remark} 
\newtheorem{example}[theorem]{Example}
\newcommand{\eps}{\epsilon}
\begin{document}

\title{The Goeritz matrix and signature of a two bridge knot}
\begin{abstract}
According to a formula by Gordon and Litherland \cite{GordonLitherland}, the signature $\sigma (K)$ of a knot $K$ %in the three sphere, 
can be computed as $\sigma (K) = \sigma (G) - \mu$ where $G$ is the Goeritz matrix of a projection $D$ of $K$ while $\mu$ is a suitable \lq\lq correction term\rq\rq, read off from the same projection $D$. In this article, we consider the family of two bridge knots and compute the signature of their Goeritz matrices. In many cases we also compute the correction term $\mu$. More to the point, we show that every two bridge knot $K$ has distinguished projections for which $\mu=0$, obtaining $\sigma (K) = \sigma (G)$ for that projection. We provide an algorithm for finding such distinguished projections. 

This article is the result of an REU study conducted by the first author under the direction of the second. \end{abstract}
\author{Michael Gallaspy}
%\email{gallasp2@unr.nevada.edu}
\author{Stanislav Jabuka}
\email{gallasp2@unr.nevada.edu,  jabuka@unr.edu}
\address{Department of Mathematics and Statistics, University of Nevada, Reno NV 89557, USA.}
\thanks{The second author was partially supported by the NSF grant DMS-0709625, the first author was supported by an REU supplement to this grant.}
\maketitle
%%%%%
%%%%%
%%%%%
%%%%%
%%%%%
%%%%%
\section{Introduction} \label{section-introduction}
\subsection{Statement of results} 
%%%
%%%
Given a collection of nonzero integers $c_1,...,c_n\in \mathbb Z$, the associated two bridge knot/link $K_{[c_1,c_2,...,c_n]}$ is the isotopy class of the knot diagram $D_{[c_1,...,c_n]}$ as in Figure \ref{pic1}. To the ordered collection $(c_1,...,c_n)$ we associate a rational number $p/q$ by means of its continued fraction expansion, i.e. we define  
$$[c_1,...,c_n] := c_1-\cfrac{1}{c_2-\cfrac{1}{ \ddots - \cfrac{1}{c_{n-1}-\cfrac{1}{c_n}}}}   $$ 
and set $p/q= [c_1,...c_n]$. It is a remarkable theorem of J. Conway \cite{Conway} that if two continued fractions $[c_1,...,c_n]$ and $[d_1,...,d_m]$ yield the same rational number $p/q$, then the two knots/links $K_{[c_1,...,c_n]}$ and $K_{[d_1,...,d_m]}$ are isotopic. This justifies the notation $K_{p/q}$ instead of $K_{[c_1,...,c_n]}$ which we shall employ when convenient. 
\begin{figure}[htb!] 
\centering
\includegraphics[width=14cm]{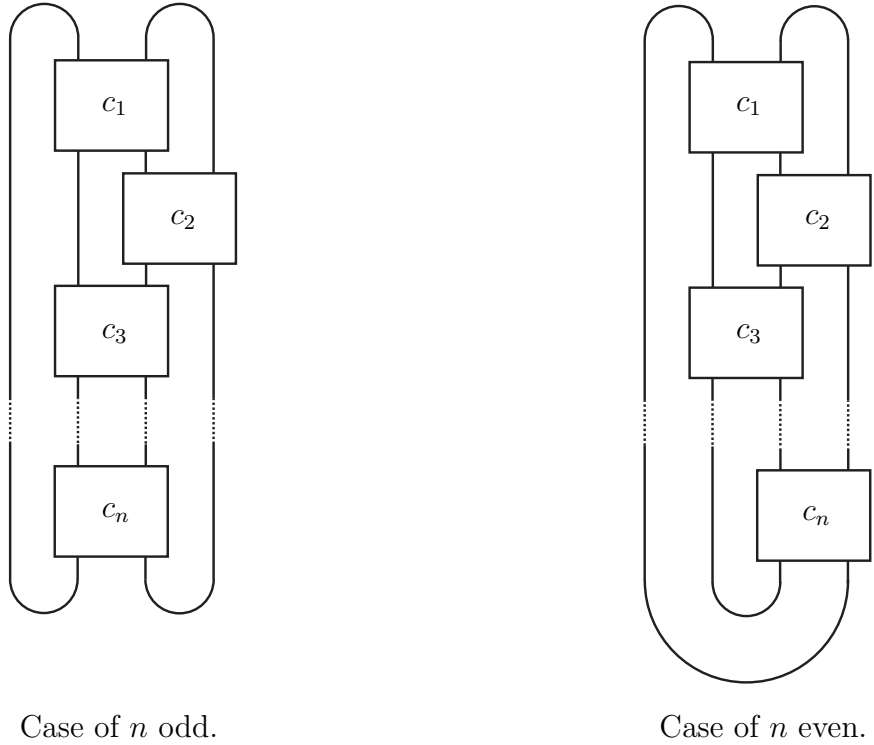}
\put(-331,222){$c_1$}
\put(-91,222){$c_1$}
\put(-305,179){$c_2$}
\put(-65,179){$c_2$}
\put(-331,136){$c_3$}
\put(-91,136){$c_3$}
\put(-331,68){$c_n$}
\put(-66,67){$c_n$}
\put(-362,-15){Case of $n$ odd.}
\put(-120,-15){Case of $n$ even.}
\caption{The two bridge knot/link associated to the integers $c_1,...,c_n\in \mathbb Z$. The meaning of each box containing an integer is as in Figure \ref{pic1b}. Throughout the article we shall refer to this projection of the knot/link $K_{[c_1,...,c_n]}$ as $D_{[c_1,...,c_n]}$. }  \label{pic1}
\end{figure}
\begin{figure}[htb!] 
\centering
\includegraphics[width=10cm]{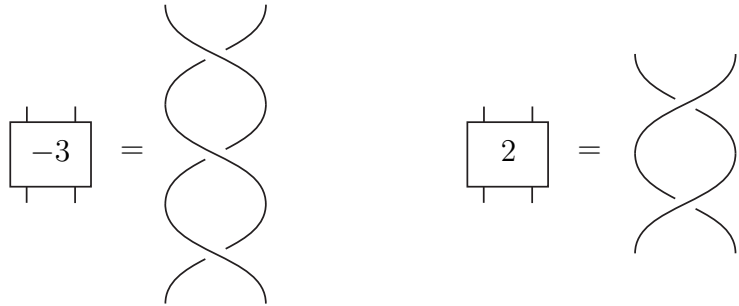}
\put(-272,57){$-3$}
\put(-238,58){$=$}
\put(-94,57){$2$}
\put(-65,58){$=$}
\caption{Each of the boxes from Figure \ref{pic1},  containing a integer $c$ and two incoming/outgoing strings, represents a pair of parallel strands with $|c|$ half-twists. Our convention is that $c>0$ corresponds to right-handed and $c<0$ to left-handed half-twists. }  \label{pic1b}
\end{figure}

Before stating our main results, we pause to define the notion of a {\em canonical representation} of a continued fraction  (see \cite{Khinchin}, Section 1.2). Given the equation $[c_1,...,c_n]=p_n/q_n$, the choice of integers $p_n,q_n$ is of course not unique. However, we define a canonical choice of $p_n$ and $q_n$ for each $[c_1,...,c_n]$, by induction on $n$, as follows: If $n=1$ then set $p_1=c_1$ and $q_1=1$. Suppose the canonical representations of all continued fractions of length $n-1$ have been defined, then we declare $p_n$ and $q_n$, the canonical representation of $[c_1,...,c_n]$,  as given by 
$$p_n = c_1p_{n-1} - q_{n-1} \quad \quad \text{ and } \quad \quad q_n = p_{n-1},$$
where $p_{n-1}$ and $q_{n-1}$ are the canonical representatives of $[c_2,...,c_n]$. To make our definition plausible, note that 
$$
[c_1,...,c_n]  = c_1 - \frac{1}{[c_2,...,c_n]} = c_1 - \frac{1}{\frac{p_{n-1}}{q_{n-1}}} =\frac{ c_1 p_{n-1} - q_{n-1}}{p_{n-1}} = \frac{p_n}{q_n}
$$
From hereon out, whenever we write $[c_1,...,c_n]=p/q$, we shall take $p$ and $q$ to be the canonical representatives of $[c_1,...,c_n]$ without explicit say. 
\begin{remark} \label{odd-n}
Since $[c_1,...,c_n] = [c_1,...,c_n\pm 1, \pm 1]$, we can always assume, without loss of generality, that a knot $K_{p/q}$ equals $K_{[c_1,...,c_n]}$ with $n$ odd.
\end{remark}

The Goeritz form $G=G(D)$ associated to a particular projection $D$ of the knot $K$, is a symmetric, bilinear, non-degenerate form $G:\mathbb Z^N\times \mathbb Z^N\to \mathbb Z$ where $N$ depends on the diagram $D$ (for the benefit of the reader, we recount the definition of the Goeritz form $G$ in Section \ref{section-goeritz-mu}). By means of choosing a basis for $\mathbb Z^N$, we will allow ourselves to view $G$ as an $N\times N$ symmetric non-degenerate matrix, referred to as the Goeritz matrix. As such, it can be diagnalized over the rationals, i.e. one can find an $N\times N$ regular matrix $P$, with rational entries, such that $P^\tau G P$ is the diagonal matrix $Diag (a_1,...,a_N)$. We shall capture such a statement by writing 
$$P^\tau G P = \langle a_1 \rangle \oplus \langle a_2 \rangle \oplus ... \oplus \langle a_N\rangle$$
where $\langle a \rangle$ should be thought of as a matrix representative of a bilinear form on a $1$-dimensional rational vector space. With these mind, our main result is contained in the next theorem.
\begin{theorem} \label{main}
Let $K=K_{[c_1,...,c_n]}$ be the two bridge knot associated to the ordered collection $(c_1,....,c_n)$ of nonzero integers and assume that $n$ is odd (see Remark \ref{odd-n}). Let $G$ be the Goeritz matrix of $K$ associated to its projection $D_{[c_1,..,c_n]}$ as in Figure \ref{pic1}. 

Then there is a matrix $P\in Gl_N(\mathbb Q)$, where $N=|c_1|+|c_3|+...+|c_n|-1$, with $\det P=\pm 1$ and such that 
$$ P^\tau G P = \bigoplus_{i=1,3,5,...,n}  \left( \oplus _{k=1}^{|c_i|-1} \langle -\eps_i \textstyle \frac{k+1}{k} \rangle \right) \oplus  \bigoplus _{i=2,4,...,n-1} \left\langle \textstyle \frac{p_{i+1}}{c_{i+1} \,  p_{i-1}} \right\rangle. $$
Here $\eps_i = Sign(c_i)$ and $p_m$ is the numerator of the canonical representation of $[c_1,...,c_m] = \frac{p_m}{q_m}$, $m\le n$. Accordingly, the signature $\sigma (G)$ of the Goeritz matrix $G$ is given by
$$
\sigma (G)  =  \sum _{i=1,3,...,n} (\eps _i - c_i)  +\sum _{i=2,4,...,n-1} Sign\left(\textstyle \frac{p_{i+1}}{c_{i+1} \, p_{i-1}}\right)  
$$
\end{theorem}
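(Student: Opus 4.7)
The plan is to compute the Goeritz matrix $G$ of $D_{[c_1,\ldots,c_n]}$ explicitly from a checkerboard coloring, observe that it has a block--tridiagonal structure, and diagonalize it over $\mathbb Q$ by a sequence of unimodular shears whose cumulative effect is the matrix $P$.

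\emph{Setting up $G$.} With $n$ odd, fix the checkerboard coloring in which the $|c_i|$ crossings of each odd-indexed twist box create a vertical stack of $|c_i|-1$ shaded regions, while each even-indexed twist box glues neighboring stacks together through $|c_{i+1}|$ crossings all separating the same pair of shaded regions. After discarding one region, as is customary, one is left with $N=|c_1|+|c_3|+\cdots+|c_n|-1$ generators. I expect $G$ to decompose as a block-tridiagonal matrix in which each odd-indexed $c_i$ contributes a tridiagonal block $T_i$ of size $|c_i|-1$ with diagonal entries $-2\eps_i$ and off-diagonal entries $\eps_i$, while each even-indexed $c_{i+1}$ contributes a rank-one coupling between the tail of $T_i$ and the head of $T_{i+2}$, together with shifts of the adjacent diagonals by $\pm c_{i+1}$.

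\emph{Diagonalizing each $T_i$.} A short recursion shows that, for $k=1,\ldots,|c_i|-2$, adding $\tfrac{k}{k+1}$ times row/column $k$ symmetrically to row/column $k+1$ eliminates the $(k,k+1)$ and $(k+1,k)$ entries and transforms the $(k+1,k+1)$-entry from $-2\eps_i$ to $-\eps_i\tfrac{k+2}{k+1}$. Iterating from the top-left, the block reduces to
$$\bigoplus_{k=1}^{|c_i|-1}\langle -\eps_i\tfrac{k+1}{k}\rangle.$$
Each elementary step is a unimodular shear, so $\det P=\pm 1$ is preserved throughout.

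\emph{Collapsing the bridges.} The same reductions propagate entries into the rows/columns linking $T_i$ to $T_{i+2}$. I would proceed by induction on $i$, showing that after fully diagonalizing $T_1,\ldots,T_i$ the residual coupling between their tails and $T_{i+2}$ collapses, after one additional reduction step, to a single diagonal entry $\langle\tfrac{p_{i+1}}{c_{i+1}p_{i-1}}\rangle$. The essential algebraic input is the canonical continued-fraction recursion $p_{i+1}=c_{i+1}p_i-p_{i-1}$, which reproduces the exact numerator appearing in the reduction; the denominator $c_{i+1}p_{i-1}$ emerges from the accumulated factors of $(k+1)/k$ collected while diagonalizing $T_i$, combined with the coefficient $c_{i+1}$ originally sitting on the bridge.

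\emph{Signature formula.} Summing the signs of the diagonal entries yields
$$\sigma(G)=\sum_{i\text{ odd}}(-\eps_i)(|c_i|-1)+\sum_{i\text{ even}}Sign\!\left(\tfrac{p_{i+1}}{c_{i+1}p_{i-1}}\right),$$
and the identity $\eps_i|c_i|=c_i$ converts the first sum into $\sum_{i\text{ odd}}(\eps_i-c_i)$, matching the stated expression.

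The main obstacle lies in the bridge step: tracking precisely how the rational entries produced by reducing $T_i$ interact with the bridge contribution of $c_{i+1}$ and collapse to the clean expression $p_{i+1}/(c_{i+1}p_{i-1})$. This bookkeeping is driven by the canonical continued-fraction recursion, and managing the signs $\eps_i$ consistently through all the reductions will require care.
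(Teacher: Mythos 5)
Your overall strategy --- write down the Goeritz matrix of $D_{[c_1,\ldots,c_n]}$ explicitly and diagonalize it by unimodular congruence --- is exactly the paper's, and your treatment of the odd blocks is correct: the symmetric shears adding $\tfrac{k}{k+1}$ times row/column $k$ to row/column $k+1$ do turn each tridiagonal block into $\oplus_{k}\langle -\eps_i\tfrac{k+1}{k}\rangle$, matching the paper's Lemma \ref{lemma-auxx-1}. However, your description of $G$ itself is wrong in a way that makes the target form unreachable. For this diagram and coloring, each even-indexed twist box contributes its \emph{own} white region $e^{2i}$ (its crossings separate $e^{2i}$ from the unbounded region $e^0$, not the tail of one odd stack from the head of the next), so the basis consists of $\sum_{i\,\mathrm{odd}}(|c_i|-1)$ generators from the odd blocks \emph{plus} $\tfrac{n-1}{2}$ generators $e^2,e^4,\ldots,e^{n-1}$; only then does the count equal $N=|c_1|+|c_3|+\cdots+|c_n|-1$. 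In your model the even boxes contribute only a rank-one coupling between adjacent stacks, which leaves you with $N-\tfrac{n-1}{2}$ generators --- too few to produce the stated diagonal form, whose $\tfrac{n-1}{2}$ entries $\langle p_{i+1}/(c_{i+1}p_{i-1})\rangle$ live precisely on the (suitably corrected) generators $e^{2i}$, whose diagonal entries are $c_{2i}-\eps_{2i-1}-\eps_{2i+1}$.

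The second problem is that the step you yourself flag as ``the main obstacle'' is where essentially all the content of the proof resides, and you have not carried it out. In the paper this occupies Lemmas \ref{lemma-auxx-2}--\ref{lemma-auxx-4}: one first corrects $e^{2j}$ by multiples of the already-orthogonalized block vectors to get $\hat f^{2j}$ with $\langle \hat f^{2j},\hat f^{2j}\rangle = c_{2j}-\tfrac{1}{c_{2j-1}}-\tfrac{1}{c_{2j+1}}$ and, crucially, a \emph{residual coupling} $\langle \hat f^{2j},\hat f^{2j\pm 2}\rangle = \tfrac{1}{c_{2j\pm 1}}$ between consecutive even-box generators; eliminating these produces the three-term recursion $\lambda_{2j} = \bigl(c_{2j}-\tfrac{1}{c_{2j-1}}-\tfrac{1}{c_{2j+1}}\bigr)-\tfrac{1}{c_{2j-1}^2\lambda_{2j-2}}$, which is then identified with $p_{2j+1}/(c_{2j+1}p_{2j-1})$ via $p_{m}=c_{m}p_{m-1}-p_{m-2}$. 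Your instinct that the continued-fraction recursion drives the collapse is right, but note that your phrasing suggests each bridge entry is computed independently from $T_i$, $c_{i+1}$, $T_{i+2}$; in fact each diagonal entry depends on the previous one through $\lambda_{2j-2}$, so the induction must carry the full value $\lambda_{2j-2}$ (not just its sign) forward. As written, the proposal establishes the block diagonalization and the final arithmetic rewriting of the signature, but neither the correct form of $G$ nor the bridge collapse, so it does not yet constitute a proof.
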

It is well known that $\det K_{[c_1,...,c_n]}= |p_n|$, cf. \cite{BurdeZieschang}. Since the determinant of the matrix $P$ from Theorem \ref{main} is $\pm 1$, it follows that the determinant of $G$ agrees with the determinant of $P^\tau GP$. The latter can easily be seen to equal $\pm p_n$ thereby verifying this well know fact.

By the Gordon-Litherland formula \cite{GordonLitherland}, the signature $\sigma (K)$ of a knot $K$ can be computed as 
$$ \sigma (K) = \sigma (G) - \mu, $$
where $G$ is a Goeritz matrix of $K$ associated to a diagram $D$, and $\mu = \mu(D)$ is a \lq\lq correction term\rq\rq, also read off from $D$ (we provide a detailed description of $\mu$ in Section \ref{section-goeritz-mu}). Having computed the signature of $G$ for the case of a two bridge knot $K$ in Theorem \ref{main}, the Gordon-Litherland formula provides an impetus for computing $\mu$. While we are not able to do this in full generality, we provide explicit formulae for $\mu (D_{[c_1,...,c_n]})$ for $n=1,3,5$. More importantly, we prove the existence of a continued fraction expansion $[c_1,...,c_n]$ for every two bridge knot such that $\mu$ for the corresponding diagram for $D_{[c_1,...,c_n]}$ is vanishing and therefore $\sigma (G) = \sigma (K_{[c_1,...,c_n]})$. The details follow. 

The next proposition states the values of $\mu (D_{[c_1,...,c_n]})$ for $n=1,3,5$ (the case of $n=1$ being stated for completeness). Whether or not $K_{[c_1,...,c_n]}$ is a knot or a link, depends on the parities of the coefficients $c_i$, and we only focus on those leading to knots. To reduce the number of parity choices to state, we note that the knots $K_{[c_1,...,c_n]}$ and $K_{[c_n,...,c_1]}$ are isotopic (the projection of one can be obtained from the other by two rotations by $180^\circ$). Thus their signatures are equal and so, for instance, rather than stating signature formulas for the parity cases $(\text{odd},\text{odd},\text{even})$ and $(\text{even},\text{odd},\text{odd})$, we only state one of these. 
\begin{proposition} \label{prop1}
We consider the diagram $D_{[c_1,...,c_n]}$ as in Figure \ref{pic1} and let $\mu = \mu(D_{[c_1,...,c_n]})$. The three tables below state the various parity conditions (up to symmetry) on the $c_i$ leading to knots and list the corresponding correction term $\mu$ and the signature of the knot. The quantities $\frac{p_3}{c_3 \, p_1}$ and $\frac{p_5}{c_5 \, p_3}$ appearing in the tables, can be computed from the coefficients $c_i$ explicitly as 
$$ \frac{p_3}{c_3 \,p_1} = c_2 - \frac{1}{c_1} - \frac{1}{c_3} \quad \quad \text{ and } \quad \quad \frac{p_5}{c_5 \, p_3} = c_4-\frac{1}{c_3}-\frac{1}{c_5} - \frac{1}{(c_3)^2(c_2-\frac{1}{c_1} - \frac{1}{c_3})}.$$
With this in mind, here are the tables:
%\vskip1mm
%\underline{$n=1$}
%
$$
\begin{array}{c||c||c} 
\text{Parity of } c_1 & \mu & \sigma \left(K_{[c_1]}\right) \cr \hline \hline 
\text{odd} &0  & \eps_1 - c_1 
\end{array}
$$
\vskip3mm
%\underline{$n=3$}
%
$$
\begin{array}{c||c||c||c} 
No. & \text{Parity of }(c_1,c_2,c_3) & \mu & \sigma \left(K_{[c_1,c_2,c_3]}\right) \cr \hline \hline 
1& (\text{odd},\text{odd},\text{odd}) & -c_1+c_2-c_3 & \eps_1+\eps_3 -c_2+Sign(\frac{p_3}{c_3\, p_1})  \cr \hline
2 & (\text{odd},\text{odd},\text{even}) & -c_3 & \eps_1 -c_1+\eps_3 +Sign(\frac{p_3}{c_3\, p_1})   \cr \hline
3 & (\text{odd},\text{even},\text{even}) &  0 &\eps_1 -c_1+\eps_3 - c_3 +Sign(\frac{p_3}{c_3\, p_1})  
\end{array}
$$
\vskip3mm
%\underline{$n=5$}
%\begin{minipage}{70mm} $\eps_1-c_1+\eps_3-c_3+\eps_5-c_5+Sign(\frac{p_3}{c_3\, p_1})+Sign(\frac{p_5}{c_5\, p_3})$ \end{minipage}
$$
\begin{array}{c||c||c||c} 
No. & \text{Parity of }(c_1,c_2,c_3,c_4,c_5) & \mu & \sigma \left(K_{[c_1,c_2,c_3,c_4,c_5]}\right) \cr \hline \hline 
1 & (\text{odd},\text{odd},\text{odd},\text{odd}, \text{even}) & -c_1+c_2-c_3 & 
\begin{minipage}{45mm} $\eps_1-c_2 +\eps_3+ \eps_r - c_r +Sign(\frac{p_3}{c_3\, p_1})+Sign (\frac{p_5}{c_5\, p_3})$ \end{minipage} \cr \hline
%%%
2 & (\text{odd},\text{odd},\text{odd},\text{even}, \text{odd}) & -c_3+c_4-c_5 &  \begin{minipage}{45mm} $\eps_1-c_1+\eps_3-c_3-c_4+\eps_5+Sign(\frac{p_3}{c_3\, p_1})+Sign(\frac{p_5}{c_5\, p_3})$ \end{minipage}\cr \hline
%%%
3 & (\text{odd},\text{odd},\text{odd},\text{even}, \text{even}) & -c_1+c_2-c_3-c_5 &  \begin{minipage}{45mm} $\eps_1-c_2+\eps_3+\eps_5+Sign(\frac{p_3}{c_3\, p_1})+Sign(\frac{p_5}{c_5\, p_3})$ \end{minipage}\cr \hline
%%%
4 & (\text{odd},\text{odd},\text{even},\text{odd}, \text{even}) & -c_3 & \begin{minipage}{45mm} $\eps_1-c_1+\eps_3+\eps_5-c_5+Sign(\frac{p_3}{c_3\, p_1})+Sign(\frac{p_5}{c_5\, p_3})$ \end{minipage} \cr \hline
%%%
5 & (\text{even},\text{odd},\text{odd},\text{odd}, \text{even}) & -c_1-c_5 & \begin{minipage}{45mm} $\eps_1+\eps_3-c_3+\eps_5+Sign(\frac{p_3}{c_3\, p_1})+Sign(\frac{p_5}{c_5\, p_3})$ \end{minipage} \cr \hline
%%%
6 & (\text{odd},\text{odd},\text{even},\text{even}, \text{odd}) & \begin{minipage}{30mm} $-c_1+c_2-c_3+c_4-c_5$ \end{minipage} & \begin{minipage}{45mm} $\eps_1-c_2+\eps_3-c_4+\eps_5+Sign(\frac{p_3}{c_3\, p_1})+Sign(\frac{p_5}{c_5\, p_3})$ \end{minipage}  \cr \hline
%%%
7 & (\text{odd},\text{even},\text{odd},\text{even}, \text{odd}) & 0  & \begin{minipage}{45mm} $\eps_1-c_1+\eps_3-c_3+\eps_5-c_5+Sign(\frac{p_3}{c_3\, p_1})+Sign(\frac{p_5}{c_5\, p_3})$ \end{minipage} \cr \hline
%%%
8 & (\text{odd},\text{odd},\text{even},\text{even}, \text{even}) &  -c_3-c_5 & \begin{minipage}{45mm} $\eps_1-c_1+\eps_3+\eps_5+Sign(\frac{p_3}{c_3\, p_1})+Sign(\frac{p_5}{c_5\, p_3})$ \end{minipage} \cr \hline
%%%
9 & (\text{odd},\text{even},\text{even},\text{odd}, \text{even}) & -c_5   & \begin{minipage}{45mm} $\eps_1-c_1+\eps_3-c_3+\eps_5+Sign(\frac{p_3}{c_3\, p_1})+Sign(\frac{p_5}{c_5\, p_3})$ \end{minipage} \cr \hline
%%%
10 & (\text{even},\text{odd},\text{odd},\text{even}, \text{even}) & -c_1  & \begin{minipage}{45mm} $\eps_1+\eps_3-c_3+\eps_5-c_5+Sign(\frac{p_3}{c_3\, p_1})+Sign(\frac{p_5}{c_5\, p_3})$ \end{minipage} \cr \hline
%%%
11 & (\text{odd},\text{even},\text{even},\text{even}, \text{even}) & 0  & \begin{minipage}{45mm} $\eps_1-c_1+\eps_3-c_3+\eps_5-c_5+Sign(\frac{p_3}{c_3\, p_1})+Sign(\frac{p_5}{c_5\, p_3})$ \end{minipage} \cr \hline
%%%
12 & (\text{even},\text{even},\text{odd},\text{even}, \text{even}) &  0 & \begin{minipage}{45mm} $\eps_1-c_1+\eps_3-c_3+\eps_5-c_5+Sign(\frac{p_3}{c_3\, p_1})+Sign(\frac{p_5}{c_5\, p_3})$ \end{minipage}
%%%
\end{array}
$$
\end{proposition}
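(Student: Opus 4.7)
The plan is to apply the Gordon-Litherland formula $\sigma(K)=\sigma(G)-\mu$ together with Theorem \ref{main}, reducing the problem to computing the correction term $\mu(D_{[c_1,\ldots,c_n]})$ for each parity pattern listed in the tables. First I would recall from Section \ref{section-goeritz-mu} that, after fixing a checkerboard coloring, $\mu$ is a signed count of crossings of a particular type (type II). In the standard diagram $D_{[c_1,\ldots,c_n]}$, every crossing inside a single twist box has the same sign $\eps_i=Sign(c_i)$, and because the $|c_i|$ crossings all share the same pair of shaded neighbors, they are \emph{uniformly} type I or uniformly type II. Thus the $i$-th twist box contributes either $0$ or $\pm c_i$ to $\mu$, and the task is to decide which boxes contribute and with what sign.

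The second step is to track how the checkerboard coloring propagates across $D_{[c_1,\ldots,c_n]}$. A twist box of horizontal (respectively vertical) orientation interchanges its top and bottom (respectively left and right) shaded regions if and only if $|c_i|$ is odd. This parity-driven swap governs whether the coloring seen by the next twist box matches the previous one or not, and hence dictates whether that box's crossings come out as type I or type II. The orientation of the box (horizontal for odd $i$, vertical for even $i$) additionally fixes whether a contributing box enters $\mu$ with a plus or a minus sign, which accounts for the alternating pattern $-c_1+c_2-c_3+c_4-c_5$ visible in several rows of the tables.

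With this framework in place, the proof reduces to a case-by-case verification. For each of the $1+3+12=16$ tabulated parity patterns I would propagate the coloring through the diagram, identify which twist boxes are type II, and sum their signed contributions to read off $\mu$. Substituting the result into $\sigma(K)=\sigma(G)-\mu$ with the expression for $\sigma(G)$ from Theorem \ref{main},
\[
\sigma(K)=\sum_{i\text{ odd}}(\eps_i-c_i)+\sum_{i\text{ even}}Sign\!\left(\textstyle\frac{p_{i+1}}{c_{i+1}\,p_{i-1}}\right)-\mu,
\]
and performing a straightforward cancellation yields the signature formula in the rightmost column. The alternative closed forms for $\frac{p_3}{c_3 p_1}$ and $\frac{p_5}{c_5 p_3}$ follow by unrolling the canonical recursion $p_i=c_ip_{i-1}-q_{i-1}$, $q_i=p_{i-1}$, and do not require separate proof.

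The main obstacle is precisely the bookkeeping in the case analysis for $n=5$: twelve parity patterns, each requiring a careful trace of the coloring through five twist regions and a consistent orientation convention. No individual case is deep, but it is easy to lose track of how the coloring flips when several even entries appear consecutively, and the proof needs to be organized so that the table entries emerge cleanly. The cases $n=1$ and $n=3$ serve as useful base cases that pin down the sign conventions before the more elaborate $n=5$ table is tackled.
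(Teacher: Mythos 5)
Your proposal matches the paper's proof: the paper likewise derives each table entry by combining Theorem \ref{main} with a case-by-case determination of which twist boxes consist of type II crossings (each box being uniformly type I or type II), and it only writes out two representative cases, leaving the remaining parity patterns as analogous bookkeeping. The one point to tighten is that what propagates through the diagram is the \emph{orientation} of the two strands (the checkerboard coloring is fixed once and for all by the convention on the unbounded region); the parity of $c_i$ controls whether the strands swap, which is what determines the type of the next box's crossings.
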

The next theorem furnishes an explicit computation of $\sigma (K_{p/q})$ provided one finds what we shall deem an {\em even continued fraction expansion of $p/q$}, i.e. a continued fraction expansion $p/q=[c_1,...,c_n]$ with $n$ odd and with $c_{2i}$ being even for all $2i\le n$. 
\begin{theorem} \label{main2}
Every two bridge knot is of the form $K_{[c_1,...,c_n]}$ with $[c_1,...,c_n]$ being an even continued fraction expansion. If $p/q=[c_1,...,c_n]$ is one such expansion, then the correction term $\mu$ of the associated diagram $D_{[c_1,...,c_n]}$ is zero and thus 
$$\sigma (K_{p/q}) = \sigma (G) = \sum _{i=1,3,...,n} (\eps_i - c_i)   + \sum _{i=2,4,...,n-1} Sign\left(  \frac{p_{i+1}}{c_{i+1} \, p_{i-1}}\right)$$ 
Here, as elsewhere, $p_i/q_i$ are the canonical representatives of $[c_1,...,c_i]$, $i\le n$.
\end{theorem}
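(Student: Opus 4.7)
The proof has two main ingredients: (a) every two-bridge knot admits an \emph{even} continued fraction expansion in the sense defined just before the theorem, and (b) for any such expansion the Gordon-Litherland correction $\mu(D_{[c_1,\ldots,c_n]})$ vanishes. Given (a) and (b), Theorem \ref{main} combined with the Gordon-Litherland formula $\sigma(K)=\sigma(G)-\mu$ immediately yields the stated signature formula.

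For (a), I would induct on $|q|$ in a representative $p/q$ of $K$. I first normalize to $p,q$ both odd using the equivalence $K_{p/q}=K_{p/(q+p)}$, which is valid since $p$ is odd (as $K$ is a knot) and $q'\equiv q\pmod{p}$ gives the same two-bridge knot. The base case $|q|=1$ is the trivial expansion $[p]$. For $|q|\geq 3$ the goal is to produce nonzero integers $c_1$ and nonzero even $c_2$ so that after two continued-fraction steps we either terminate in a base case (when $|q_3|=|c_2(c_1q-p)-q|=1$) or arrive at a rational $p_3/q_3$ with $p_3,q_3$ both odd and $|q_3|<|q|$, so that the inductive hypothesis applies; prepending $c_1,c_2$ then completes the induction. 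A small case split handles the two scenarios: when some nonzero even $c_1$ achieves $|c_1q-p|<|q|$, I take $c_1$ even (which forces $p_3$ odd by a parity check) and then minimize $|q_3|$ over nonzero even $c_2$; in the exceptional case (roughly $|p|\leq|q|$, e.g.\ $p/q=1/3$), one allows $c_1$ odd and uses a divisibility argument to pick $c_2$ forcing $|q_3|=1$ directly.

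For (b), I would analyze $\mu$ directly from its definition in Section \ref{section-goeritz-mu} as a signed count of type-II crossings in the checkerboard-colored diagram $D_{[c_1,\ldots,c_n]}$. Inside each twist box of $|c_i|$ half-twists the crossing types alternate, so a box with an even number of twists contributes zero to $\mu$. Tracking how the checkerboard coloring propagates from one box to the next, the hypothesis $c_{2i}\in 2\mathbb{Z}$ forces every residual contribution to cancel, giving $\mu=0$. This matches the $\mu=0$ entries of Proposition \ref{prop1} (case 3 for $n=3$, and cases 7, 11, 12 for $n=5$) and extends to arbitrary odd $n$ by induction, by examining how appending the pair of boxes $(c_{n-1},c_n)$ with $c_{n-1}$ even alters $\mu$. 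The main obstacle is precisely this $\mu$-analysis: the coloring-and-type bookkeeping across many twist boxes is combinatorially delicate, and the evenness hypothesis on $c_{2i}$ is exactly what makes each local cancellation succeed. The existence part (a), by contrast, is essentially a careful two-step Euclidean algorithm with imposed parity constraints.
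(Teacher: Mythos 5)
Your overall decomposition --- (a) every two-bridge knot has an even continued fraction expansion, (b) $\mu=0$ for the associated diagram, then invoke Theorem \ref{main} and Gordon--Litherland --- is exactly the paper's. For (a), your induction on $|q|$ via a two-step Euclidean argument with parity constraints is a plausible alternative to the paper's route (the paper runs a greedy modified Euclidean algorithm making $c_1$ odd and every subsequent partial quotient even, then repairs the parity of the length at the end using $[c_1,\ldots,c_n\pm1,\pm1]$ or the identity $K_{p/q}=K_{p/(p+q)}$, which replaces $[c_1,\ldots,c_n]$ by $[1,1+c_1,c_2,\ldots,c_n]$); your case analysis is left unverified, but the idea is workable.

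The genuine gap is in (b). Your mechanism --- ``inside each twist box of $|c_i|$ half-twists the crossing types alternate, so a box with an even number of twists contributes zero to $\mu$'' --- is false. The type $\tau(p)$ of a crossing depends only on the strand orientations and the checkerboard shading near $p$, and both of these are identical at every crossing of a given twist box; hence all crossings in a box have the \emph{same} type. This is visible in the paper's Figures \ref{pic5} and \ref{pic6} (each box is labelled entirely type I or entirely type II) and in the $\mu$-values of Proposition \ref{prop1}, which are sums of full terms $\pm c_i$ rather than the $0$ or $\pm1$ per box that alternation would produce. So the cancellation you need does not happen crossing-by-crossing within a box; it is a global statement about which whole boxes are type II, and you acknowledge you have not carried out that bookkeeping. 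The paper sidesteps it entirely: when every $c_{2i}$ is even, the black regions of the checkerboard coloring of $D_{[c_1,\ldots,c_n]}$ form an \emph{orientable} spanning surface (a Seifert surface) for the knot, and Gordon--Litherland prove that $\mu$ vanishes whenever the chosen spanning surface is orientable. If you want to avoid citing that fact, you must redo the type analysis correctly: determine, from the parities of the $c_i$ and the propagation of orientations through the boxes, exactly which boxes consist of type II crossings, and show the signed count vanishes when all $c_{2i}$ are even. As written, your argument for (b) does not go through.
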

%%%%%%%%%%%%%%%%%%%%%%%%%%%%%%%%%%%%%%%
%%%%%%%%%%%%%%%%%%%%%%%%%%%%%%%%%%%%%%%
%%%%%%%%%%%%%%%%%%%%%%%%%%%%%%%%%%%%%%%
%%%%%%%%%%%%%%%%%%%%%%%%%%%%%%%%%%%%%%%
%%%%%%%%%%%%%%%%%%%%%%%%%%%%%%%%%%%%%%%
%%%%%%%%%%%%%%%%%%%%%%%%%%%%%%%%%%%%%%%
%%%%%%%%%%%%%%%%%%%%%%%%%%%%%%%%%%%%%%%
%%%%%%%%%%%%%%%%%%%%%%%%%%%%%%%%%%%%%%%
\subsection{Applications and examples} 
The primary utility of Theorems \ref{main} and \ref{main2} is that of computing the signature $\sigma (K_{[c_1,...,c_n]})$. A formula for computing the signature of $K_{p/q}$ can be found in K. Murasugi's book \cite{Murasugi}, by which one forms the sequence $\{0,q,2q,3q,...,(p-1)q\}$, divides each entry by $2p$ and records its remainder $r$ with $-p<r<p$ to get a new sequence (of remainders) $\{0,r_1,r_2,...,r_{p-1}\}$. The signature $\sigma (K_{p/q})$ is then the number of positive entries in $\{0,r_1,...,r_{p-1}\}$ minus the number of negative entries\footnote{This algorithm for computing $\sigma (K_{p/q})$ assumes that $0<q<p$ and that $q$ is odd, both of which can always be achieved for any two bridge knot.}.  

The formula for $\sigma (K_{[c_1,...,c_n]})$ provided by Theorems \ref{main} and \ref{main2} is of a rather different nature, relying on the coefficients of the continued fraction expansion of $p/q$ rather than $p$ and $q$ themselves. This can lead to significantly shorter computations in some examples. For instance, considering the knots $K_{3023/151}$ and $K_{52587/4825}$
and using the continued fraction expansions 
$$ 3023/150=[20,-50,3] \quad \quad \text{ and } \quad \quad 52587/4825 =[11,10,9,8,7],$$
the formulae from Proposition \ref{prop1} readily yield (using line $3$ in the second table and line $7$ in the third table)
$$\sigma (K_{3023/151}) = -22 \quad \quad \text{ and } \quad \quad \sigma(K_{52587/4825})=-22.$$
On the other hand, the algorithm from \cite{Murasugi} requires us to form lists of remainders with $3023$ and $52587$ elements respectively, and count the number of positive versus negative entries. The advantage of our approach becomes even more prominent when $p$ is larger still. 
\vskip3mm
Recall that a knot $K$ is called {\em slice} if it is the boundary of a smoothly and properly embedded $2$-disk in the $4$-ball $D^4$. Two knots $K_1$ and $K_2$ are called {\em concordant} if $-K_1\#{K}_2$ is slice (where $-K_1$ is the reverse mirror of $K_1$). The notion of concordance is an equivalence relation and its equivalence classes, under the operation $\#$ of connect summing, form an Abelian group $\mathcal C$ called the {\em concordance group}. The concordance group $\mathcal C$ is a central object in low dimensional topology with relevance and applications to the theory of $3$-manifolds and smooth $4$-manifolds. Even so, it remains rather poorly understood (see \cite{Jabuka} for a survey of recent results), not even the possible types of torsion elements of $\mathcal C$ are known. 

While the subgroup of $\mathcal C$ generated by two bridge knots is not known, P. Lisca \cite{Lisca1, Lisca2} was able to obtain a complete list of slice two bridge knots as well as a complete list of slice knots among twofold sums $K_1\#K_2$ of two bridge knots. Beyond this, little is known about when a sum $K_1\#...\#K_n$ of two bridge knots is slice. Since slice knots have signature zero and $\sigma (K_1\# ... \#K_n) = \sigma (K_1) + ... + \sigma (K_n)$, Theorems \ref{main} and \ref{main2} provide a computable obstruction to sliceness of $K_1\#K_2\#...\#K_n$. Here are a few of many possible examples along these lines.   
\begin{example}
Consider the knots $K_1 = K_{35/16}$, $K_2=K_{283/34}$ and $K_3=K_{1193/145}$. Then neither of the knots 
$$(\pm K_1) \# (\pm K_2) \# (\pm K_3)$$
can be slice. Since $35/16 = [2,-5,3]$, $283/34=[8,-3,11]$ and $1193/145=[8,-4,3,2,-6]$, Proposition \ref{prop1} shows that 
$$\sigma (K_{35/16}) = -2 \quad \quad \quad \sigma (K_{283/34}) = -10 \quad \quad \quad \sigma (K_{1193/145}) = -4$$
from which the above claim follows. 
\end{example}  
\begin{example}
Let $K_1=K_{187/26}$, $K_2=K_{1451/131}$ and $K_3=K_{715/23}$. Then the knot $K_1\# (n\cdot K_2)\#(m\cdot K_3)$ cannot be slice for any choice of $m,n\in \mathbb Z$. Here $n\cdot K$ stands for the $n$-fold connected sum of $K$ with itself. 

By Proposition \ref{prop1}, it follows that 
$$
\begin{array}{lcl}
187/26=[7,-5,5] &\quad  \Longrightarrow \quad & \sigma (K_1) = 6 \cr
1451/131 = [11,-13,10] & \quad  \Longrightarrow \quad & \sigma (K_2) = -10 \cr
715/23 = [31,-12,-2] & \quad  \Longrightarrow \quad & \sigma (K_3) = -30
\end{array}
$$
Thus, the signature of $(n\cdot K_2)\#(m\cdot K_3)$ is divisible by $5$, for any choice of $m,n\in \mathbb Z$, while the signature of $K_1$ is not.
\end{example}
%%%%%%%%%%%%%%%%%%%%%%%%%%%%
%%%%%%%%%%%%%%%%%%%%%%%%%%%%
%%%%%%%%%%%%%%%%%%%%%%%%%%%%
%%%%%%%%%%%%%%%%%%%%%%%%%%%%
%%%%%%%%%%%%%%%%%%%%%%%%%%%%
%%%%%%%%%%%%%%%%%%%%%%%%%%%%
\subsection{Organization}
The remainder of this article is organized as follows. Section \ref{section-goeritz-mu} reviews the definitions of the Goeritz matrix $G$ and the correction term $\mu$ associated to a knot diagram $D$. Section \ref{section-proof-of-main-theorem} is devoted to the proof of Theorem \ref{main} while the final Section \ref{section-small-correction-terms} addresses the statements from Proposition \ref{prop1} and Theorem \ref{main2}. Section \ref{section-small-correction-terms} also provides an explicit algorithm for finding an even continued fraction expansion for any given two bridge knot. 
%%%%%%%%%%%%%%%%%%%%%%%%%%%%
%%%%%%%%%%%%%%%%%%%%%%%%%%%%
%%%%%%%%%%%%%%%%%%%%%%%%%%%%
%%%%%%%%%%%%%%%%%%%%%%%%%%%%
%%%%%%%%%%%%%%%%%%%%%%%%%%%%
%%%%%%%%%%%%%%%%%%%%%%%%%%%%
%%%%%%%%%%%%%%%%%%%%%%%%%%%%
%%%%%%%%%%%%%%%%%%%%%%%%%%%%
%%%%%%%%%%%%%%%%%%%%%%%%%%%%
%%%%%%%%%%%%%%%%%%%%%%%%%%%%
\section{The Goeritz matrix $G$ and the correction term $\mu$} \label{section-goeritz-mu}
%%%
%%%
This section elucidates the definitions of the Goeritz matrix $G=G(D)$ and the correction term $\mu=\mu(D)$, both associated to a projection $D$ of a knot $K$. Our exposition largely follows the introductory section  from \cite{GordonLitherland}. 
\vskip1mm
Let $K$ be an oriented knot and let $D$ be a projection of $K$. We color the regions of $D$ black and white, giving it a checkerboard pattern. Our convention is that the unbounded region of  $D$ receive a white coloring, see Figure \ref{pic2} for an example. 
\begin{figure}[htb!] 
\centering
\includegraphics[width=10cm]{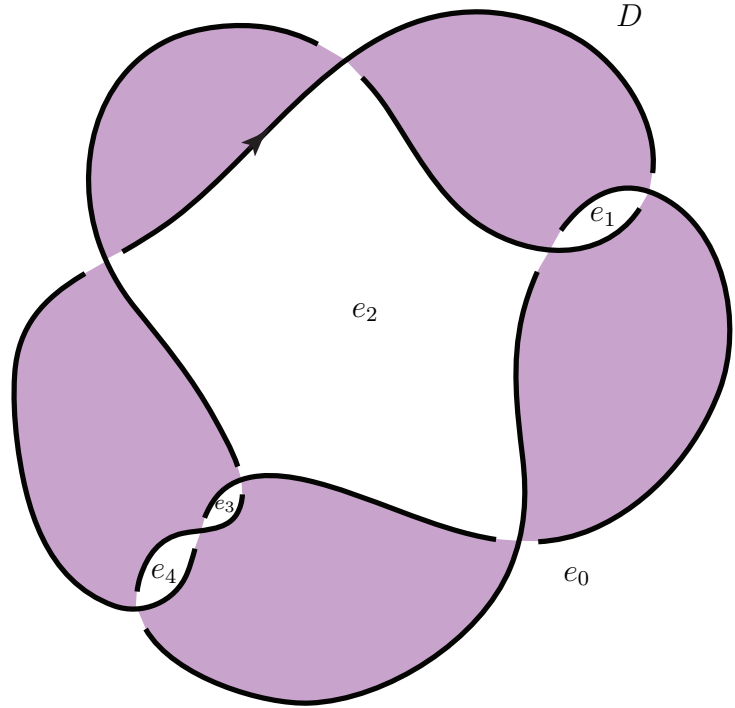}
\put(-50,270){$D$}
\put(-70,60){$e_0$}
\put(-60,196){$e_1$}
\put(-150,160){$e_2$}
\put(-202,87){\tiny $e_3$}
\put(-226,61){$e_4$}
\caption{The checkerboard black-and-white coloring of the regions of this diagram $D$ of the knot $K=8_6$ from the knot tables \cite{KnotInfo}.}  \label{pic2}
\end{figure}

To each crossing $p_i$ in the diagram $D$, we associate two pieces of data, the sign of the crossing $\eta(p_i)$ and the type of the crossing $\tau(p_i)$. The computation of both is defined by Figure \ref{pic3}: 
\begin{figure}[htb!] 
\centering
\includegraphics[width=14cm]{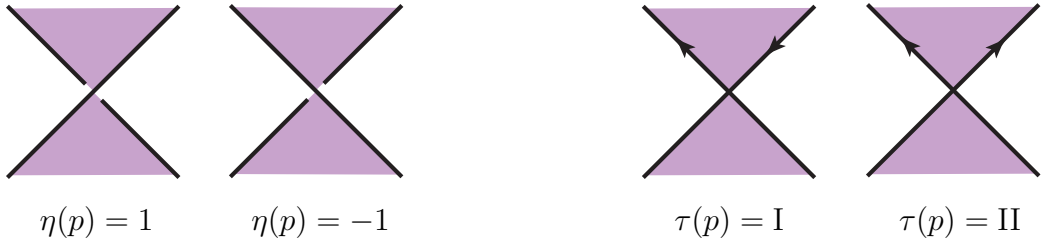}
\put(-382,-10){$\eta(p) = 1$}
\put(-302,-10){$\eta(p) = -1$}
\put(-142,-10){$\tau(p) = \text{I}$}
\put(-57,-10){$\tau(p) = \text{II}$}
\caption{The functions $\eta$ and $\tau$ assign to a double point $p$ the values $\pm 1$ and types I/II respectively. Note that $\eta(p)$ only depends on over/under-crossing information about $p$ while $\tau(p)$ only depends on the orientation of the diagram $D$ near $p$. }  \label{pic3}
\end{figure}

Let $\{e_0,...,e_N\}$ be a labeling of the white regions in the checkerboard pattern of $D$, with the convention that $e_0$ labels the unbounded region, and let $\mathbb Z^{N+1}$ be the free Abelian group generated by these symbols. Then the {\em pre-Goeritz form $PG$} is a bilinear symmetric form $PG:\mathbb Z^{N+1}\times \mathbb Z^{N+1} \to \mathbb Z$ whose associated matrix $[g_{ij}]$ with respect to the ordered basis $\{e_0,...,e_N\}$,  is given by 
$$g_{ij} = \left\{ 
\begin{array}{ll}
\displaystyle -\sum_{p\in e_i \cap e_j} \eta (p) & \quad ; \quad i\ne j  \cr
 & \cr
\displaystyle - \sum _{k\ne i} g_{ik} & \quad ; \quad i= j
\end{array}
\right. 
$$
The sum $\sum _{p\in e_i \cap e_j} \eta (p)$ is over all double points $p$ that connect the two white regions $e_i$ and $e_j$ in the diagram $D$. The {\em Goeritz form $G$} is obtained by restricting the form $PG$
to $\mathbb Z^{N}\times \mathbb Z^{N}$ where $\mathbb Z^{N}\subset \mathbb Z^{N+1}$ is obtained by discarding the $\mathbb Z$ summand generated by $e^0$. Since this construction relies on the choice of a basis of $\mathbb Z^N$, namely $\{e_1,...,e_N\}$,  we can, and often shall, think of $G$ as a symmetric $N\times N$ square matrix (with integer entries), called the {\em Goeritz matrix}. It follows from the work in \cite{GordonLitherland} that $\det G = \pm \det K$ so that $G$ is in fact a regular matrix. 

For simplicity of notation, we adopt the following convention which will substantially simplify our computations in the next section:  
\begin{equation} \label{convention} 
G(a,b) = \langle a, b \rangle \quad \quad \forall a,b\in \mathbb Z^{N}
\end{equation}

For example, the Goeritz matrix associated to the diagram $D$ and the basis $\{e_1,e_2,e_3,e_4\}$ from Figure \ref{pic2}, is given by 
$$G= \left[
\begin{array}{rrrr}
-2 & 1 & 0 & 0\cr
1 & -5 & 1 & 0 \cr
0 & 1 & -2 & 1 \cr
0 & 0 & 1 & -2
\end{array}
\right]$$ 
We leave it as an exercise to show that the signature of this matrix $G$ is $\sigma (G) = -4$ and its determinant is $\det G =23$.
 
The {\em correction term $\mu=\mu(D)$} associated to an oriented knot diagram $D$ is computed as 
$$ \mu = \sum _{\tau(p) = \text{II}} \eta (p)$$
In the above, the sum is taken over all double points $p$ of $D$ that are of type II. For example, for the diagram $D$ from Figure \ref{pic2}, one finds $\mu=-2$ (the only type II crossings are those adjacent to region $e_1$). 
With these understood, the following is proved in \cite{GordonLitherland}. 
\begin{theorem}[Gordon-Litherland \cite{GordonLitherland}]
Given any oriented diagram $D$ of a knot $K$, the signature $\sigma (K)$ of $K$ can be computed as 
$$ \sigma (K) = \sigma (G) - \mu.$$
Here $G$ and $\mu$ are the Goeritz matrix and the correction term associated to $D$.  
\end{theorem}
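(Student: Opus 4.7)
The plan is to prove the Gordon--Litherland formula by realizing $\sigma(K)$ as a $4$-manifold signature and identifying that signature, up to an explicit normal-Euler-number correction, with $\sigma(G)$. The central geometric object is one of the two checkerboard surfaces of $D$; I will work with the white surface $F$, whose bounded regions $e_1,\dots,e_N$ give exactly the basis used to define $G$.

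The first step is a purely homological identification. I would define, intrinsically on any (possibly non-orientable) spanning surface $F$ for $K$, the Gordon--Litherland pairing $G_F\colon H_1(F;\mathbb Z)\times H_1(F;\mathbb Z)\to \mathbb Z$ by $G_F(a,b)=\operatorname{lk}(\hat a, b)$, where $\hat a$ is the transfer of $a$ to $S^3\setminus F$ via the orientation double cover of the normal bundle $\nu F$ (pushing off to both sides and summing). Taking as a basis of $H_1(F;\mathbb Z)$ the cycles supported on the bounded white regions, a crossing-by-crossing local computation shows that the $(i,j)$-entry of the matrix of $G_F$ picks up $-\eta(p)$ at each double point $p\in e_i\cap e_j$, matching the combinatorial definition of $G$ in Section~\ref{section-goeritz-mu}. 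Thus $G=G_F$ as symmetric bilinear forms.

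Next I would realize $\sigma(K)$ via a branched cover. Push $F$ slightly into $D^4$ to get a properly embedded surface with boundary $K\subset S^3=\partial D^4$, and let $W\to D^4$ be the double cover branched along the pushed-in $F$; then $\partial W=\Sigma_2(K)$. Standard results (applying the $G$-signature theorem to the $\mathbb Z/2$-action, or the classical argument of Viro and Kauffman--Taylor) identify $\sigma(W)$ with the knot signature $\sigma(K)$. A Mayer--Vietoris computation then decomposes the intersection form on $H_2(W;\mathbb Q)$ as the Gordon--Litherland form $G_F+\tfrac12\, e(F)$, where $e(F)$ denotes the normal Euler number of the pushed-in $F$ — this is the usual contribution that one picks up because $F$ is typically non-orientable, so the normal bundle has a nontrivial self-intersection even though it bounds.

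Finally, I would reduce $e(F)$ to the combinatorial quantity $\mu$. The Euler number localizes at the crossings: each double point $p$ contributes $\pm 1$ according to a simple local model, and a direct case check on the four possibilities in Figure~\ref{pic3} shows that type~I crossings contribute $0$ to $e(F)$ after the appropriate sign cancellation, while type~II crossings contribute $\eta(p)$. Taking signatures gives $\sigma(K)=\sigma(W)=\sigma(G)-\mu$, as claimed. The main obstacle is not conceptual but book-keeping: matching the sign conventions for $\eta$, $\tau$, and the choice of ``white'' versus ``black'' surface so that the $\tfrac12 e(F)$ term comes out as exactly $\mu=\sum_{\tau(p)=\mathrm{II}}\eta(p)$ rather than its negative or a type~I counterpart. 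Once the local crossing contributions are tabulated correctly, everything else (the identification $G=G_F$, the branched cover signature equals $\sigma(K)$, the Mayer--Vietoris decomposition) is routine.
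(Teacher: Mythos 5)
A point of context first: the paper does not prove this statement at all --- it is quoted from \cite{GordonLitherland} and used as a black box --- so the only meaningful comparison is with Gordon and Litherland's original argument, which your sketch does follow in outline (checkerboard surface, the linking-number description of the Goeritz form, the double cover of $D^4$ branched over the pushed-in surface, and a normal Euler number localized at the crossings). The identification of the combinatorial matrix $G$ with the pairing $G_F$ on $H_1$ of the white surface, and the reduction of $\tfrac12 e(F)$ to $\sum_{\tau(p)=\mathrm{II}}\eta(p)$ by a local model at each crossing, are both correct as you describe them, modulo the sign bookkeeping you already flag.

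There is, however, one concrete misstep at the heart of your second step. The claim that ``standard results identify $\sigma(W)$ with $\sigma(K)$'' is only true when the spanning surface is \emph{orientable}: that is the Viro/Kauffman--Taylor statement for a pushed-in Seifert surface. A checkerboard surface is generically non-orientable, and for it the two correct statements are: (i) the intersection form of $W_F$ is isomorphic to $G_F$ on the nose (Gordon--Litherland's Theorem 2), with no Euler-number term appearing in it; and (ii) $\sigma(W_F)-\sigma(K)=\tfrac12 e(F)=\mu(F)$, which is proved by gluing $W_F$ to the cover branched over a Seifert surface along their common boundary $\Sigma_2(K)$ and applying Novikov additivity together with the $G$-signature theorem to the resulting closed $4$-manifold; the signature defect is carried entirely by the self-intersection of the branch locus. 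Your version asserts $\sigma(W)=\sigma(K)$ unconditionally and then tries to recover the correction by decomposing the intersection form as ``$G_F+\tfrac12 e(F)$,'' which does not parse (one cannot add a number to a bilinear form), and taken literally your step would yield $\sigma(K)=\sigma(G)$ with no $\mu$ at all. The repair is precisely to move the $\tfrac12 e(F)$ term from the intersection form of $W_F$, where it does not belong, to the comparison between $\sigma(W_F)$ and $\sigma(K)$, which is where the entire content of the theorem lives.
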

Using again the example from Figure \ref{pic2}, we can now compute the signature of $K=8_6$ by means of the above theorem:  
$$\sigma(8_6) = \sigma (G) - \mu = -4 -(-2) = -2.$$
Alternatively, the knot $8_6$ is the two bridge knot $K_{23/10}$ (according to KnotInfo \cite{KnotInfo}) and since $23/10=[2,-3,3]$, Proposition \ref{prop1} also shows that $\sigma (K_{23/10}) = -2$. 
%%%%%%%%%%%%%%%%%%%%%%%%%%%%
%%%%%%%%%%%%%%%%%%%%%%%%%%%%
%%%%%%%%%%%%%%%%%%%%%%%%%%%%
%%%%%%%%%%%%%%%%%%%%%%%%%%%%
%%%%%%%%%%%%%%%%%%%%%%%%%%%%
%%%%%%%%%%%%%%%%%%%%%%%%%%%%
%%%%%%%%%%%%%%%%%%%%%%%%%%%%
%%%%%%%%%%%%%%%%%%%%%%%%%%%%
%%%%%%%%%%%%%%%%%%%%%%%%%%%%
%%%%%%%%%%%%%%%%%%%%%%%%%%%%
\section{The proof of Theorem \ref{main}} \label{section-proof-of-main-theorem}
%%%
%%%
This section is devoted to the proof of Theorem \ref{main}. We first work out the Goeritz matrix associated to the specific diagram $D=D_{[c_1,...,c_n]}$ utilized in Figure \ref{pic1} and then proceed to diagonalize it by employing the Gram-Schmidt process. 
\vskip1mm
Let $n>0$ be an odd integer (compare Remark \ref{odd-n}), let $c_1,...,c_n$ be a collection of nonzero integers and let $K=K_{[c_1,...,c_n]}$ be the associated two bridge knot. Let $D=D_{[c_1,...,c_n]}$ be the diagram of $K$ as in Figure \ref{pic4} (see also Figure \ref{pic1}). 
\begin{figure}[htb!] 
\centering
\includegraphics[width=10cm]{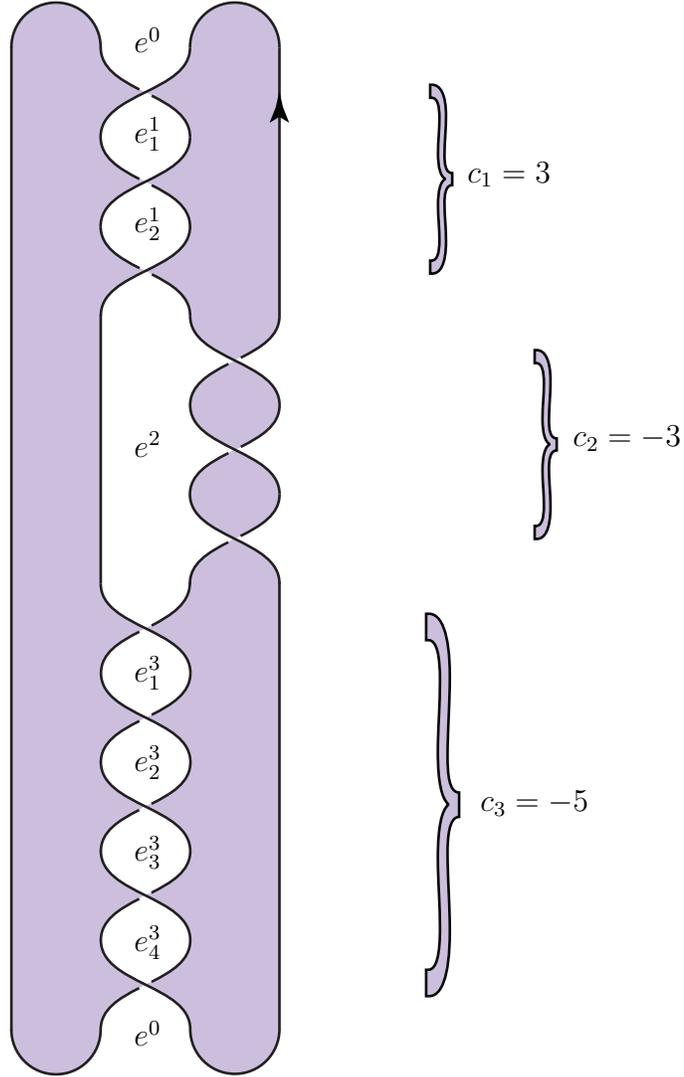}
\put(-196,398){$e^0$}
\put(-196,364){$e^1_1$}
\put(-196,330){$e^1_2$}
\put(-196,245){$e^2$}
\put(-196,160){$e^3_1$}
\put(-196,126){$e^3_2$}
\put(-196,92){$e^3_3$}
\put(-196,58){$e^3_4$}
\put(-196,22){$e^0$}
\put(-70,349){$c_1=3$}
\put(-30,249){$c_2=-3$}
\put(-65,111){$c_3=-5$}
\caption{The two bridge knot $K_{47/14}$ where $47/14=[3,-3,-5]$. }  \label{pic4}
\end{figure}
Give $D$ a checkerboard coloring and label its white regions as $e^0, e^2, ..., e^{n-1}$ and $e^1_1,...,e^1_{|c_1|-1}, ..., e^n_1,...,e^n_{|c_n|-1}$, where the labels are chose as:
\begin{align} \nonumber
e^0 & = \text{Unbounded white region.} \cr
e^{2i} & = \text{White region adjacent to the $c_{2i}$ half-twists.} \cr
e^{2i+1}_1,..., e^{2i+1}_{|c_{2i+1}|-1} & = \text{White regions adjacent to the $c_{2i+1}$ half-twists.}
\end{align}
These account for all white regions of $D$ showing that there is exactly 
$$N+1=|c_1|+|c_3|+...+|c_n| $$
of them (we express this number as $N+1$ since the region $e^0$ is discarded eventually when passing from the pre-Goertiz to the Goeritz matrix). The ordering of this basis for $\mathbb Z^N$ that we prefer to use is 
$$\{e^0, e^1_1,...,e^1_{|c_1|-1}, e^3_1,...,e^3_{|c_3|-1}, ..., e^n_1,...,e^n_{|c_n|-1}, e^2, e^4, ...., e^{n-1} \}$$

Recall our convention \eqref{convention} by which we write $\langle a,b \rangle$ for $G(a,b)$. With this in mind, it is easy to see, by consulting Figure \ref{pic4}, that 
\begin{align}\label{relations-for-the-es}
\langle e^i_k, e^j_\ell \rangle & = \left\{
\begin{array}{cl}
-2\eps_i & \quad ; \quad i=j \text{ and } |k-\ell|=0 \cr
\eps _i   & \quad ; \quad i=j \text{ and } |k-\ell|=1 \cr
0           & \quad ; \quad i\ne j \text{ or } |k-\ell |\ge 2    
\end{array}
\right. \cr
& \cr
\langle e^i, e^j_\ell\rangle & =  \left\{
\begin{array}{cl}
-\eps_{i-1} & \quad ; \quad j=i-1 \text{ and } \ell=|c_{i-1}-1 \cr
-\eps _{i+1}   & \quad ; \quad j=i+1 \text{ and } \ell=1 \cr
0           & \quad ; \quad \text{otherwise }   
\end{array}
\right. \cr
& \cr
\langle e^i , e^j \rangle & =  \left\{
\begin{array}{cl}
c_i-\eps_{i-1}-\eps_{i+1} & \quad ; \quad i=j \cr
0           & \quad ; \quad i\ne j    
\end{array}
\right. 
 \end{align}
In the above, we have used the abbreviation 
$$\eps _ i = Sign(c_i) $$
which we shall retain for the remainder of this section. The above relations capture the Goeritz matrix $G$ associated to the diagram $D=D_{[c_1,...,c_n]}$: 
\begin{equation} \label{matrix-of-G}
 \tiny G= 
 \left[
\begin{array}{ccccc|c|cccc|c|c|c|c}
&&&&   &\dots&& && &&0&\dots&0 \cr
&&&&   &\dots&& &&&&0&\dots&0 \cr
&&\eps_1 X_{|c_1|-1}&  &&\dots&& &0& &&\vdots&\dots&\vdots\cr 
&&&&  &\dots&& &&&&0&\dots&0 \cr
&&&&  &\dots&& &&&&\eps_1&\dots&0 \cr \hline 
\vdots&\vdots&\vdots&\vdots  &\vdots&\ddots&\vdots&\vdots &\vdots&\vdots&\vdots&\vdots&\ddots&\vdots\cr \hline
&&&&   &\dots&& &&&&0&\dots&\eps _n \cr
&&&&  &\dots&& &&&&0&\dots&0 \cr
&&0&&  &\dots&&&\eps_n X_{|c_n|-1} &&&\vdots&\dots&\vdots \cr 
&&&&  &\dots&& && &&0&\dots&0\cr
&&&&  &\dots&& &&&&0&\dots&0 \cr \hline 
0&0&\dots&0&\eps_1  &\dots&0& 0&\dots &0&0&c_2-\eps_1-\eps_3&\dots&0 \cr \hline 
\vdots&\vdots&\dots&\vdots &\vdots&\dots&\vdots& \vdots&\dots&\vdots&\vdots&\dots&\ddots&\vdots \cr \hline 
0&0&\dots&0&0  &\dots&\eps_n& 0&\dots &0&0&0&\dots& c_{n-1}-\eps_{n-2}-\eps_n
\end{array}
\right],
\end{equation}
The symbol $X_m$ utilized in the description of $G$, denotes the $m\times m$ square matrix 
$$X_m = \left[ 
\begin{array}{rrrrrrr}
-2 & 1 & 0 & 0 &  \dots & 0 & 0  \cr 
1 & -2 & 1 & 0 &  \dots & 0 & 0  \cr 
0 & 1 & -2 & 1 &  \dots & 0 & 0  \cr
0 & 0 & 1 & -2 &  \dots & 0 & 0  \cr
\vdots & \vdots & \vdots & \vdots & \ddots & \vdots & \vdots  \cr
0 & 0 & 0 & 0 &  \dots & 1 & 0  \cr 
0 & 0 & 0 & 0 &  \dots & -2 & 1  \cr 
0 & 0 & 0 & 0 &  \dots & 1 & -2
\end{array}
\right]. $$
We now turn to the task of diagonalizing $G$. We do so by thinking of $G:\mathbb Z^N \times \mathbb Z^N\to \mathbb Z$ as a bilinear form, one whose matrix description \eqref{matrix-of-G} is a facet of having chosen the basis %$\{e^1_1,...,e^{n-1}\}$ 
\begin{equation} \label{old-e-basis}
\mathcal E = \{ e^{1}_1, ..., e^1_{|c_1|-1}\, \, , \,  \, e^3_1,...,e^3_{|c_3|-1}  \, \, , ... , \, \, e^n_1,...,e^n_{|c_n|-1} \, \, , \, e^2, e^4, ..., e^{n-1}  \}
\end{equation}
for $\mathbb Z^N$. Our task then becomes to find a new basis for $\mathbb Z^N$, one with respect to which $G$ has a diagonal matrix representative. The new basis 
\begin{equation} \label{new-f-basis}
\mathcal F =  \{ f^{1}_1, ..., f^1_{|c_1|-1}\, \, , \,  \, f^3_1,...,f^3_{|c_3|-1}  \, \, , ... , \, \, f^n_1,...,f^n_{|c_n|-1} \, \, , \, f^2, f^4, ..., f^{n-1}  \}
\end{equation}
will be obtained in several steps, each of which follows the Gram-Schmidt procedure. We outline our steps through the four Lemmas \ref{lemma-auxx-1},  \ref{lemma-auxx-2},  \ref{lemma-auxx-3} and \ref{lemma-auxx-4}. By way of nomenclature, we shall say that $a,b\in \mathbb Z^N$ are {\em orthogonal} if $\langle a,b\rangle=0$ (i.e. if $G(a,b)=0$). A subset $A\subset \mathbb Z^N$ is orthogonal if $\langle a, b\rangle = 0$ for all $a,b \in A$. 
\begin{lemma} \label{lemma-auxx-1}
For any choice of $i\in \{1,3,5,...,n\}$ and $k\in \{1,...,|c_i|-1\}$, let $f^i_k$ be defined as
\begin{equation} \label{definition-of-fs-1}
f^i_k = \frac{\eps _i}{k} (e^i_1 + 2 e^i_2+3e^i_3+...+ke^i_k).  
\end{equation}
Then the set $\{f^i_k\}^{i=1,3,...,n} _{k=1,...,|c_i|-1}$ is orthogonal and 
$$\langle f^i_k,f^i_k \rangle = -\eps_i \, \frac{k+1}{k}.$$
\end{lemma}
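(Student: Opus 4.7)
The plan is to compute $\langle f^i_k, f^j_\ell \rangle$ directly from bilinearity of $G$ together with the pairings \eqref{relations-for-the-es} for the basis vectors $e^i_k$. First, when $i \ne j$ are both odd, every inner product $\langle e^i_a, e^j_b \rangle$ vanishes, so $\langle f^i_k, f^j_\ell \rangle = 0$ automatically. This reduces everything to the case of fixed odd $i$, where
\[
\langle f^i_k, f^i_\ell \rangle \;=\; \frac{1}{k\ell} \sum_{a=1}^k \sum_{b=1}^\ell ab \, \langle e^i_a, e^i_b \rangle
\]
after using $\eps_i^2 = 1$. Since $\langle e^i_a, e^i_b \rangle$ is nonzero only when $a=b$ (value $-2\eps_i$) or $|a-b|=1$ (value $\eps_i$), the double sum collapses to a diagonal piece together with two off-diagonal bands $b=a\pm 1$.

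For $k \ne \ell$, by the symmetry $\langle f^i_k, f^i_\ell\rangle = \langle f^i_\ell, f^i_k\rangle$ I may assume $k < \ell$. In this regime the constraint $b = a+1 \le \ell$ imposes no new restriction on $a \in \{1,\dots,k\}$, and $b = a-1 \ge 1$ only eliminates the $a=1$ term, which already contributes $1\cdot 0 = 0$. All three bands then share the common range $a \in \{1,\dots,k\}$, and the pointwise identity $-2a^2 + a(a+1) + a(a-1) = 0$ gives termwise cancellation, so $\langle f^i_k, f^i_\ell \rangle = 0$.

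For $k = \ell$, the $b = a+1$ band's range shrinks to $a \in \{1,\dots,k-1\}$, so the cancellation above fails by exactly the missing $a=k$ term, which contributes $\eps_i \cdot k(k+1)$ to the bracketed sum. Hence the double sum reduces to $-\eps_i\, k(k+1)$, and dividing by $k\ell = k^2$ yields
\[
\langle f^i_k, f^i_k \rangle \;=\; \frac{-\eps_i\, k(k+1)}{k^2} \;=\; -\eps_i\,\frac{k+1}{k},
\]
as claimed.

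The only real obstacle is bookkeeping: the $b = a \pm 1$ bands have ranges that differ from the diagonal by one boundary index, and which of the two boundary corrections actually survives depends on whether $k<\ell$ or $k=\ell$. Once these endpoints are tracked carefully, the algebra is elementary, and the single pointwise identity $-2a^2 + a(a+1) + a(a-1) = 0$ does essentially all of the work in both cases.
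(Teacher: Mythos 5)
Your proof is correct and follows essentially the same route as the paper: both expand $\langle f^i_k, f^i_\ell\rangle$ bilinearly over the tridiagonal pairings of the $e^i_a$ and reduce everything to the identity $-2a^2 + a(a-1) + a(a+1) = 0$, with the nonzero diagonal value arising precisely from the boundary term at $a=k$ that is absent when $k=\ell$. Your band-by-band bookkeeping is, if anything, slightly more explicit about the endpoint ranges than the paper's row-by-row grouping.
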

\begin{proof}
Since $\langle e^i_k, e^j_\ell \rangle =0$ whenever $i\ne j$, it follows that $\langle f^i_k, f^j_\ell\rangle = 0$ for all $i\ne j$. When $i=j$, pick two indices $k< \ell$ from $\{1,...,|c_i|-1\}$, then 
\begin{align} \nonumber
k\ell \cdot \langle f^i_k, f^i_\ell \rangle & =  \left\langle \sum _{r=1}^k r e^i_r , \sum _{s=1}^\ell s e^i_s \right\rangle  \cr
& =\langle e^i_1,e^i_1 + 2e^i_2\rangle +  \sum _{r=2}^k (\langle re^i_r, (r-1)e^i_{r-1}+r e^i_r +(r+1)e^i_{r+1} \rangle \cr
& =\eps_i  \sum _{r=2}^k r(r-1)-2r^2+r(r+1) \cr
& = 0 
\end{align}
Similarly, computing $\langle f^i_k, f^i_k\rangle$ gives
\begin{align} \nonumber
k^2\cdot \langle f^i_k, f^i_k \rangle & = \langle \sum _{r=1}^k r e^i_r, \sum _{s=1}^k s e^i_s\rangle \cr
& =  \langle e^i_1, e^i_1 +2e^i_2\rangle +  \langle ke^i_k, (k-1)e^i_{k-1} +ke^i_k \rangle + \cr
& \quad \quad \quad \quad + \sum _{r=2}^{k-1} (\langle re^i_r, (r-1)e^i_{r-1}+r e^i_r +(r+1)e^i_{r+1} \rangle \cr
& = \eps _i( k(k-1) -2k^2) \cr
& = -\eps_i k (k+1)
\end{align}
as claimed. 
\end{proof}
Before proceeding, we remark that the relations \eqref{relations-for-the-es} and the defintion of $f^i_k$ \eqref{definition-of-fs-1}, imply the following 
\begin{equation} \label{equation-relation-among-ej-and-fik}
\langle f^i_k, e^j \rangle =  \left\{
\begin{array}{cl}
1 & \quad ; \quad j=i+1 \text{ and } k =|c_{i}|-1\cr & \cr
\frac{1}{k} & \quad ; \quad j=i-1 \text{ and } k =1,...,|c_{i}|-1 \cr & \cr 
0 & \quad ; \quad \text{otherwise}
\end{array}
\right. 
\end{equation}
We define the remaining elements $f^2,f^4,...,f^{n-1}\in \mathbb Z^N$ for the basis $\mathcal F$ from \eqref{new-f-basis} in two steps. The next lemma first defines elements $\hat f^2,...,\hat f^{n-1}$, each of which is orthogonal to the previously defined $f^i_k$ and with $\langle \hat f^i,\hat f^j\rangle =0$ whenever $|i-j|\ge 4$. These $\hat f^i$ shall then be further modified in Lemma \ref{lemma-auxx-3} to obtain the desired $f^i$. 

\begin{lemma} \label{lemma-auxx-2}
For $j=2,4,6,...,n-1$ we define $\hat f^j$ as 
$$ \hat f^j =e^j  + \frac{|c_{j-1}|-1}{c_{j-1}} \, f^{j-1}_{|c_{j-1}|-1}  + \sum _{k=1}^{|c_{j+1}|-1} \frac{\eps_{j+1}}{k+1} \, f^{j+1}_k. $$
Then each $\hat f^j$ is orthogonal to the set $\{f^i_k\}^{i=1,3,...,n} _{k=1,...,|c_i|-1}$, and additionally 
$$ \langle \hat f^i, \hat f^j \rangle = \left\{
\begin{array}{cl}
c_j - \textstyle \frac{1}{c_{j-1}} -\frac{1}{c_{j+1}} & \quad ; \quad j=i \cr
& \cr
\frac{1}{c_{i \pm 1}} & \quad ; \quad j=i\pm 2 \cr
%& \cr
%\frac{1}{c_{i+1}} & \quad ; \quad j=i+2 \cr
& \cr
0 & \quad ; \quad \text{otherwise}  
\end{array}
\right.
$$
\end{lemma}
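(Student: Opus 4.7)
The plan is to exploit the fact that the $\hat f^j$ are defined exactly by Gram--Schmidt against the previously constructed $f^i_k$, and then to compute the residual inner products among the $\hat f^j$ by a short calculation that telescopes. The key inputs are the orthogonality relations of Lemma \ref{lemma-auxx-1}, the values $\langle f^i_k, f^i_k\rangle = -\eps_i\frac{k+1}{k}$, the auxiliary identity \eqref{equation-relation-among-ej-and-fik}, and the relations \eqref{relations-for-the-es} for the $e^i,e^j_\ell$.

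First I would establish the orthogonality of $\hat f^j$ with the family $\{f^i_k\}$. Because $f^i_k$ is supported on $\{e^i_1,\dots,e^i_{|c_i|-1}\}$ and $\langle e^j,e^i_\ell\rangle$ vanishes unless $i=j\pm 1$, only the cases $i=j-1$ and $i=j+1$ can produce a nonzero pairing. In each of these cases exactly one of the three summands defining $\hat f^j$ pairs nontrivially with $f^i_k$, and the coefficient in front of that summand is precisely chosen so as to cancel $\langle e^j,f^i_k\rangle$ from \eqref{equation-relation-among-ej-and-fik}. Concretely, for $i=j-1$ the cancellation $1+\frac{|c_{j-1}|-1}{c_{j-1}}\langle f^{j-1}_{|c_{j-1}|-1},f^{j-1}_{|c_{j-1}|-1}\rangle=0$ (using $c_{j-1}=\eps_{j-1}|c_{j-1}|$) does the job, and for $i=j+1$ each term $\frac{1}{k}$ is cancelled by $\frac{\eps_{j+1}}{k+1}\langle f^{j+1}_k,f^{j+1}_k\rangle=-\frac{1}{k}$.

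Next I would use this orthogonality together with \eqref{relations-for-the-es} to compute $\langle \hat f^i,\hat f^j\rangle$ by cases on $|i-j|$. For $|i-j|\geq 4$ every summand of $\hat f^i$ is supported on basis vectors disjoint from those of $\hat f^j$ (since the relevant labels are $\{i-1,i,i+1\}$ versus $\{j-1,j,j+1\}$), giving zero. For $j=i+2$ the only possibly common index is $i+1$; because $\hat f^{i+2}\perp f^{i+1}_k$ from the first step, the $f^{i+1}$-part of $\hat f^i$ contributes nothing, the $e^i\!\cdot\! e^{i+2}$ term vanishes by \eqref{relations-for-the-es}, and only $\langle e^i,\frac{|c_{i+1}|-1}{c_{i+1}}f^{i+1}_{|c_{i+1}|-1}\rangle$ survives, equal to $\frac{|c_{i+1}|-1}{c_{i+1}}\cdot\frac{1}{|c_{i+1}|-1}=\frac{1}{c_{i+1}}$, as required.

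The case $j=i$ is the only one demanding an actual calculation. Expanding $\langle \hat f^i,\hat f^i\rangle$ yields $\langle e^i,e^i\rangle=c_i-\eps_{i-1}-\eps_{i+1}$ plus the $f^{i-1}$-block and the $f^{i+1}$-block (the cross block vanishes). The $f^{i-1}$-block simplifies, via $\frac{|c_{i-1}|-1}{c_{i-1}}=\eps_{i-1}-\frac{1}{c_{i-1}}$, to $\eps_{i-1}-\frac{1}{c_{i-1}}$. The $f^{i+1}$-block reduces to $\eps_{i+1}\sum_{k=1}^{|c_{i+1}|-1}\frac{1}{k(k+1)}$, and the main (only) obstacle is recognizing and evaluating this telescoping sum:
\[
\sum_{k=1}^{m}\frac{1}{k(k+1)}=1-\frac{1}{m+1}=\frac{m}{m+1}.
\]
With $m=|c_{i+1}|-1$ this contributes $\eps_{i+1}-\frac{1}{c_{i+1}}$. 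Adding the three contributions, the $\eps_{i\pm 1}$ cancel and one is left with $c_i-\frac{1}{c_{i-1}}-\frac{1}{c_{i+1}}$, completing the proof. The conceptual content is entirely Gram--Schmidt; the bookkeeping with the signs $\eps_i$ and the identity $c_i=\eps_i|c_i|$ is the only place where an error could plausibly slip in.
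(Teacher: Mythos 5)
Your proposal is correct and follows essentially the same Gram--Schmidt computation as the paper: the same cancellations establish $\hat f^j\perp f^i_k$, the same telescoping sum $\sum_{k=1}^{m}\frac{1}{k(k+1)}=\frac{m}{m+1}$ handles the diagonal term, and the identity $\frac{|c_{i\pm1}|-1}{c_{i\pm1}}=\eps_{i\pm1}-\frac{1}{c_{i\pm1}}$ does the remaining bookkeeping. Your treatment of the $j=i\pm2$ case is a slight streamlining of the paper's (which expands both vectors fully and collects three terms summing to $\frac{1}{c_{i\pm1}}$), since you invoke the already-proved orthogonality $\hat f^{i+2}\perp f^{i+1}_k$ to reduce to a single surviving pairing.
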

\begin{proof}
All of these are direct computations, some of which make implicit use of the formulas from \eqref{equation-relation-among-ej-and-fik}. To begin with, note that $\langle \hat f^j, f^i_k \rangle =0$ whenever $i\ne j\pm1$. For $i=j-1$ we similarly have that $\langle \hat f^j, f^{j-1}_k \rangle =0$ if $k\ne |c_{j-1}|-1$ while if $k=|c_{j-1}|-1$ then   
\begin{align} \nonumber
\langle \hat f^j, f^{j-1}_{|c_{j-1}|-1}  \rangle & = \left\langle e^j  + \frac{1}{c_{j-1}} \, f^{j-1}_{|c_{j-1}|-1}  + \sum _{k=1}^{|c_{j+1}|-1}  \frac{\eps_{j+1}}{k+1} \, f^{j+1}_k  , f^{j-1}_{|c_{j-1}|-1}  \right\rangle \cr
& = \langle e^j, f^{j-1}_{|c_{j-1}|-1} \rangle + \frac{1}{c_{j-1}} \langle f^{j-1}_{|c_{j-1}|-1}, f^{j-1}_{|c_{j-1}|-1} \rangle \cr
& = 1+  \left( \frac{|c_{j-1}|-1}{c_{j-1}} \cdot (-\eps_{j-1}) \frac{|c_{j-1}|}{|c_{j-1}|-1} \right)  \cr
& = 0
\end{align}
Turning to the same computation with $i=j+1$, we find
\begin{align} \nonumber
\langle \hat f^j, f^{j+1}_\ell \rangle & = \left\langle e^j  + \frac{|c_{j-1}|-1}{c_{j-1}} \, f^{j-1}_{|c_{j-1}|-1}  + \sum _{k=1}^{|c_{j+1}|-1}\frac{\eps_{j+1}}{k+1 } \, f^{j+1}_k  , f^{j+1}_\ell  \right\rangle \cr
& = \langle e^j, f^{j+1}_\ell\rangle + \sum _{k=1}^{|c_{j+1}|-1} \frac{\eps_{j+1}}{k+1 } \, \langle f^{j+1}_k, f^{j+1}_\ell \rangle \cr
& = \frac{1}{\ell} + \frac{\eps_{j+1}}{\ell +1} \langle f^{j+1}_\ell ,f^{j+1}_\ell  \rangle \cr  
& = \frac{1}{\ell}+  \frac{\eps_{j+1}}{\ell +1} \cdot (-\eps_{j+1}) \frac{\ell +1}{\ell}  \cr
& = 0
\end{align}
These last two calculations verify that $\hat f^j$ is orthogonal to $f^i_k$ for any choice of $i,k$. From the definition of $\hat f^j$, it follows that $\langle \hat f^j, \hat f^i\rangle =0$ whenever $|j-i|>2$. When $i=j-2$, the following computation proves one of the remaining claims of the lemma:
\begin{align} \nonumber
 \langle & \hat f^j,  \hat f^{j-2} \rangle  = \cr 
& \textstyle = \langle e^j  + \frac{|c_{j-1}|-1}{c_{j-1}} \, f^{j-1}_{|c_{j-1}|-1}  + \sum _{k=1}^{|c_{j+1}|-1}\frac{ \eps_{j+1}}{k+1} \, f^{j+1}_k  ,  \cr 
&  \hspace{75mm} \textstyle , e^{j-2}  + \frac{|c_{j-3}|-1}{c_{j-3}} \, f^{j-3}_{|c_{j-3}|-1}  +  \sum _{k=1}^{|c_{j-1}|-1}\frac{\eps_{j-1}}{k+1} \, f^{j-1}_k \rangle  \cr 
%%%
& \textstyle = \langle e^j, \frac{\eps_{j-1}}{|c_{j-1}|} f^{j-1}_{|c_{j-1}|-1} \rangle + \langle \frac{|c_{j-1}|-1}{c_{j-1}} f^{j-1}_{|c_{j-1}|-1} , e^{j-2} \rangle + \langle \frac{|c_{j-1}|-1}{c_{j-1}} f^{j-1}_{|c_{j-1}|-1} ,  \frac{\eps_{j-1}}{|c_{j-1}|} f^{j-1}_{|c_{j-1}|-1} \rangle   \cr
%%%
& \textstyle= \frac{1}{c_{j-1}} + \frac{1}{c_{j-1}} - \frac{1}{c_{j-1}}  \cr
&  \textstyle= \frac{1}{c_{j-1}}. 
\end{align}
The very last computation is that of $\langle \hat f^i, \hat f^i\rangle$,  to which we now turn. 
\begin{align} \nonumber
\langle \hat f^i, \hat f^i\rangle & = \langle e^i, e^i\rangle +\frac{(|c_{i-1}|-1)^2}{c_{i-1}^2} \langle f^{i-1}_{|c_{i-1}|-1},f^{i-1}_{|c_{i-1}|-1}\rangle + \sum _{k=1}^{|c_{i+1}|-1} \frac{1}{(k+1)^2}\langle f^{i+1}_k, f^{i+1}_k\rangle  \cr
& \quad \quad \quad \quad + 2\frac{|c_{i-1}|-1}{c_{i-1}} \langle e^i, f^{i-1}_{|c_{i-1}|-1}\rangle + 2\sum _{k=1}^{|c_{i+1}|-1} \frac{\eps_{i+1}}{k+1}\langle e^i, f^{i+1}_k\rangle \cr
%%%
& = \langle e^i, e^i\rangle -\frac{|c_{i-1}|-1}{c_{i-1}}  -\eps_{i+1} \sum _{k=1}^{|c_{i+1}|-1} \frac{1}{k(k+1)} \cr
& \quad \quad \quad \quad + 2\, \frac{|c_{i-1}|-1}{c_{i-1}}  +\eps_{i+1} \sum _{k=1}^{|c_{i+1}|-1} \frac{2}{k(k+1)} \cr
%%%
&=  \langle e^i, e^i\rangle +\eps_{i-1} \frac{|c_{i-1}|-1}{|c_{i-1}|}  +\eps_{i+1} \sum _{k=1}^{|c_{i+1}|-1} \frac{1}{k(k+1)} \cr
%%%
& = (c_i-\eps_{i-1}-\eps_{i+1}) +\eps_{i-1} \left( 1- \frac{1}{|c_{i-1}|}\right)  + \eps_{i+1}\left( 1 - \frac{1}{|c_{i+1}|}  \right)  \cr 
%%%
& = c^i -\frac{1}{c_{i-1}}-\frac{1}{c_{i+1}}
\end{align}
This completes the proof of the lemma. 
\end{proof}

\begin{lemma} \label{lemma-auxx-3}
Define the sequence $\lambda _ {2j} \in \mathbb Q$, $j\ge 1$ recursively as 
$$\lambda_2=c_2-\frac{1}{c_1} - \frac{1}{c_3} \quad \quad \text{ and } \quad \quad \lambda_{2j} = \left(c_{2j} -\frac{1}{c_{2j-1}}-\frac{1}{c_{2j+1}} \right) - \frac{1}{c_{2j-1}^2 \cdot \lambda _{2j-2}}, \quad j\ge 2.  $$
Using this sequence, we define the vectors $f^j$, for $j=2,4,...,n-1$, as 
$$f^2 = \hat f^2 \quad \quad \quad \text{ and } \quad \quad  \quad f^j = \hat f^j - \frac{1}{c_{j-1} \cdot \lambda _{j-2}} \, f^{j-2} \text{ for } j \ge 4.$$
Then the set $\{ f^i_k, f^j\}^{i=1,3,...,n; \, j=2,4,..,n-1}_{k=1,2,...,|c_i|-1}$ is orthogonal and $\langle f^j, f^j \rangle = \lambda _j$. 
\end{lemma}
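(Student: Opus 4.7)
The plan is to prove all three assertions simultaneously by induction on the even index $j$, viewing the formula $f^j = \hat f^j - \frac{1}{c_{j-1}\lambda_{j-2}}\, f^{j-2}$ as a single Gram--Schmidt step that orthogonalizes $\hat f^j$ against the previously constructed $f^2, f^4, \dots, f^{j-2}$. The base case $j=2$ is read off directly from Lemma \ref{lemma-auxx-2}: since $f^2 = \hat f^2$, it is orthogonal to every $f^i_k$ and satisfies $\langle f^2, f^2 \rangle = c_2 - 1/c_1 - 1/c_3 = \lambda_2$.

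The structural fact that makes the induction collapse is the following observation, itself proved by a trivial sub-induction on $\ell$: each $f^\ell$ lies in the $\mathbb{Q}$-span of $\{\hat f^2, \hat f^4, \dots, \hat f^\ell\}$, with the coefficient of $\hat f^\ell$ equal to $1$. Combined with the fact from Lemma \ref{lemma-auxx-2} that $\langle \hat f^j, \hat f^m \rangle = 0$ whenever $|j-m| \geq 4$, this yields
\begin{equation*}
\langle \hat f^j, f^\ell \rangle = 0 \text{ for } \ell \leq j-4, \qquad \langle \hat f^j, f^{j-2} \rangle = \langle \hat f^j, \hat f^{j-2} \rangle = \tfrac{1}{c_{j-1}}.
\end{equation*}
Under the inductive hypothesis that $\{f^i_k\} \cup \{f^2, \dots, f^{j-2}\}$ is orthogonal with $\langle f^{j-2}, f^{j-2} \rangle = \lambda_{j-2}$, the orthogonality of $f^j$ against everything previously constructed now follows cleanly: $\langle f^j, f^i_k\rangle = 0$ since both $\hat f^j$ and $f^{j-2}$ are orthogonal to each $f^i_k$; $\langle f^j, f^\ell \rangle = 0$ for $\ell \leq j-4$ by the display above and the inductive orthogonality; and the coefficient $\frac{1}{c_{j-1}\lambda_{j-2}}$ is calibrated precisely so that $\langle f^j, f^{j-2}\rangle = \tfrac{1}{c_{j-1}} - \tfrac{\lambda_{j-2}}{c_{j-1}\lambda_{j-2}} = 0$.

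The self-pairing is then a one-line calculation: expanding by bilinearity and substituting $\langle \hat f^j, \hat f^j \rangle = c_j - 1/c_{j-1} - 1/c_{j+1}$, $\langle \hat f^j, f^{j-2}\rangle = 1/c_{j-1}$, and $\langle f^{j-2}, f^{j-2}\rangle = \lambda_{j-2}$ gives
\begin{equation*}
\langle f^j, f^j\rangle = \bigl(c_j - \tfrac{1}{c_{j-1}} - \tfrac{1}{c_{j+1}}\bigr) - \tfrac{2}{c_{j-1}^2 \lambda_{j-2}} + \tfrac{1}{c_{j-1}^2 \lambda_{j-2}} = \bigl(c_j - \tfrac{1}{c_{j-1}} - \tfrac{1}{c_{j+1}}\bigr) - \tfrac{1}{c_{j-1}^2 \lambda_{j-2}},
\end{equation*}
which is exactly the recursion defining $\lambda_j$. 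The only mildly delicate step is the structural observation that $f^\ell$ contains no $\hat f^m$-component for $m > \ell$; once this is in hand, the ``tridiagonal'' pairing of the $\hat f^j$ established in Lemma \ref{lemma-auxx-2} ensures that the Gram--Schmidt process terminates in a single subtraction at each level, and everything else is routine bookkeeping.
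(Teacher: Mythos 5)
Your proof is correct and follows essentially the same route as the paper's: an induction on the even index $j$ in which the single Gram--Schmidt subtraction $f^j = \hat f^j - \tfrac{1}{c_{j-1}\lambda_{j-2}}f^{j-2}$ is shown to kill all prior pairings, using the pairings $\langle \hat f^j,\hat f^m\rangle$ computed in Lemma \ref{lemma-auxx-2}. Your explicit observation that $f^\ell$ lies in the span of $\hat f^2,\dots,\hat f^\ell$ with unit coefficient on $\hat f^\ell$ is a nice tidying-up of a point the paper uses only implicitly, but the argument is the same.
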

\begin{proof}
The proof of this lemma, once again, is a straightforward  though tedious computation. It should be clear that $f^j \perp f^i_k$ for any choices of $i,j,k$. To verify the claim about $\langle f^j, f^i\rangle$, we proceed by induction on $j$ (and assume that $i\le j$). 

Since $f^2=\hat f^2$, it follows from Lemma \ref{lemma-auxx-2} that $\langle f^2, f^2 \rangle =\lambda_2$. Taking $j=4$, we obtain
\begin{align} \nonumber
\langle f^4, f^2 \rangle  & = \left\langle \hat f^4 - \frac{1}{c_3\cdot \lambda _2} f^2, \hat f^2 \right\rangle \cr
& = \langle \hat f^4, \hat f^2\rangle - \frac{1}{c_3 \cdot \lambda _2} \langle \hat f^2, \hat f^2\rangle  \cr
& = \frac{1}{c_3} - \frac{1}{c_3\cdot \lambda _2} \lambda _2 \cr
& = 0 \cr
& \cr
\langle f^4, f^4\rangle & =  \left\langle \hat f^4 - \frac{1}{c_3\cdot \lambda _2} \hat f^2,  \hat f^4 - \frac{1}{c_3\cdot \lambda _2} \hat f^2 \right\rangle \cr
& = \langle \hat f^4, \hat f^4\rangle - \frac{2}{c_3 \cdot \lambda _2} \langle \hat f^4, \hat f^2\rangle + \frac{1}{(c_3\cdot \lambda _2)^2} \langle \hat f^2, \hat f^2 \rangle \cr
& = \left( c_4-\frac{1}{c_3}-\frac{1}{c_5}\right) -  \frac{2}{(c_3)^2 \cdot \lambda _2}+ \frac{1}{(c_3\cdot \lambda _2)^2} \lambda _2 \cr
&=  \left( c_4-\frac{1}{c_3}-\frac{1}{c_5}\right) -  \frac{1}{(c_3)^2 \cdot \lambda _2} \cr
& = \lambda _4
\end{align}
For the step of induction, we suppose the lemma to have been proved for all $2i, 2j \le 2m-2$ and we turn to computing $\langle f^{2m}, f^{2i}\rangle$ (with $i\le m$). Firstly, suppose that $2i\le2m-4$:
\begin{align}\nonumber
\langle f^{2m}, f^{2i} \rangle & = \left\langle \hat f^{2m} - \frac{1}{c_{2m-1}\cdot \lambda _{2m-2}} f^{2m-2}, f^{2i}  \right\rangle = 0
\end{align}
Next, let's take $2i=2m-2$: 
\begin{align} \nonumber
\langle f^{2m}, f^{2m-2} \rangle & = \left\langle \hat f^{2m} - \frac{1}{c_{2m-1}\cdot \lambda _{2m-2}} f^{2m-2}, f^{2m-2} \right\rangle  \cr
%%%
& = \langle \hat f^{2m} , f^{2m-2}\rangle -\frac{1}{c_{2m-1}\cdot \lambda _{2m-2}} \langle f^{2m-2}, f^{2m-2}\rangle \cr
%%%
& = \left\langle \hat f^{2m}, \hat f^{2m-2} - \frac{1}{c_{2m-3}\cdot \lambda _{2m-4}} \hat f^{2m-4} \right\rangle -\frac{1}{c_{2m-1}} \cr
& = \frac{1}{c_{2m-1}} - \frac{1}{c_{2m-1}}  \cr
& = 0
%%%
\end{align}
It remains to address the case of $2i=2m$:
\begin{align} \nonumber
\langle f^{2m}, f^{2m}\rangle & =  \left\langle \hat f^{2m} - \frac{1}{c_{2m-1}\cdot \lambda _{2m-2}} f^{2m-2},\hat f^{2m} - \frac{1}{c_{2m-1}\cdot \lambda _{2m-2}} f^{2m-2}  \right\rangle \cr
& =  \langle \hat f^{2m} , \hat f^{2m} \rangle- \frac{2}{c_{2m-1}\cdot \lambda _{2m-2}} \langle f^{2m-2},\hat f^{2m} \rangle  + \frac{1}{(c_{2m-1}\cdot \lambda _{2m-2})^2} \langle f^{2m-2} ,  f^{2m-2}  \rangle \cr
%%%
& =  \left( c_{2m}-\frac{1}{c_{2m-1}}-\frac{1}{c_{2m+1}}\right) - \frac{2}{(c_{2m-1})^2\cdot \lambda _{2m-2}} +  \frac{\lambda _{2m-2}}{(c_{2m-1}\cdot \lambda _{2m-2})^2} \cr
& =  \left( c_{2m}-\frac{1}{c_{2m-1}}-\frac{1}{c_{2m+1}}\right) - \frac{1}{(c_{2m-1})^2\cdot \lambda _{2m-2}} \cr
& = \lambda _{2m} 
\end{align}
With this, the lemma is proved. 
\end{proof}
For the next lemma, the reader is asked to recall the definition of the {\em canonical representation} of the continued fraction $[c_1,...,c_n]$ by the rational number $p_n/q_n$ (discussed in the introduction of Section \ref{section-introduction}). 
\begin{lemma} \label{lemma-auxx-4}
Let $c_1,c_2,...,c_n$ (with $n$ odd) and $\lambda _i$, $i=2,4,...,n-1$ be as in Lemma \ref{lemma-auxx-3}. For $i=1,2,...,n$, let us introduce the relatively prime integers $p_i, q_i$ as the numerator and denominator of the canonical representation of $[c_1,...,c_i]$:
$$[c_1,c_2, ..., c_{i}] = \frac{p_i}{q_i}$$
Then $\lambda _ {2i} = \displaystyle \frac{p_{2i+1}}{c_{2i+1}\cdot p_{2i-1}}$ for each $i\in \{1,2,...,\frac{n-1}{2}\}$.
\end{lemma}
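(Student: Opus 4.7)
The plan is to prove the formula by induction on $i$, after first converting the paper's ``left-prepending'' definition of the canonical representatives $p_j/q_j$ of $[c_1, \ldots, c_j]$ into the equivalent Euler-type ``right-appending'' recurrence
\[
p_j = c_j\, p_{j-1} - p_{j-2}, \qquad p_0 := 1,\ p_{-1} := 0.
\]
This recurrence is not literally what is written in the introduction (the paper's recursion puts the new entry at the front of the sequence, not at the back), but a short induction on $j$ using the identity $[c_1, \ldots, c_j] = c_1 - 1/[c_2, \ldots, c_j]$ shows that both definitions assign the same pair $(p_j, q_j)$. I would set this aside as a brief warm-up observation.

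For the base case $i = 1$, a direct calculation using $p_3 = c_3 p_2 - p_1 = c_1 c_2 c_3 - c_1 - c_3$ and $p_1 = c_1$ gives
\[
\frac{p_3}{c_3\, p_1} = c_2 - \frac{1}{c_1} - \frac{1}{c_3} = \lambda_2,
\]
matching the definition of $\lambda_2$ from Lemma \ref{lemma-auxx-3}.

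For the inductive step, assume $\lambda_{2i-2} = p_{2i-1}/(c_{2i-1}\, p_{2i-3})$. Substituting this into the defining recurrence for $\lambda_{2i}$ yields
\[
\lambda_{2i} \;=\; c_{2i} - \frac{1}{c_{2i-1}} - \frac{1}{c_{2i+1}} - \frac{p_{2i-3}}{c_{2i-1}\, p_{2i-1}}.
\]
On the other hand, the right-appending recurrence applied at index $2i+1$ gives
\[
\frac{p_{2i+1}}{c_{2i+1}\, p_{2i-1}} \;=\; \frac{c_{2i+1} p_{2i} - p_{2i-1}}{c_{2i+1}\, p_{2i-1}} \;=\; \frac{p_{2i}}{p_{2i-1}} - \frac{1}{c_{2i+1}}.
\]
Hence the desired equality $\lambda_{2i} = p_{2i+1}/(c_{2i+1}\, p_{2i-1})$ reduces to
\[
\frac{p_{2i}}{p_{2i-1}} \;=\; c_{2i} - \frac{1}{c_{2i-1}} - \frac{p_{2i-3}}{c_{2i-1}\, p_{2i-1}},
\]
which, after clearing denominators, is exactly the right-appending recurrence $p_{2i-1} = c_{2i-1}\, p_{2i-2} - p_{2i-3}$ at the odd index $2i-1$. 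This closes the induction.

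The only genuine obstacle is the bookkeeping needed to reconcile the paper's prepending-based definition of the $p_j$ with the appending-based recurrence driving the manipulation; once that identification is in hand, the rest is a single algebraic rearrangement.
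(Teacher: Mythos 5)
Your proof is correct and follows essentially the same route as the paper's: both rely on the three-term recurrence $p_j = c_j p_{j-1} - p_{j-2}$ (the paper simply cites Khinchin's Theorem~1 for it, whereas you propose deriving it from the prepending definition) and then verify by induction that $p_{2i+1}/(c_{2i+1}\, p_{2i-1})$ satisfies the defining recursion for $\lambda_{2i}$, with the identical base case $p_3 = c_1c_2c_3 - c_1 - c_3$. The only quibble is that your final ``clearing denominators'' step also uses the recurrence at index $2i$ (to replace $p_{2i}$ by $c_{2i}p_{2i-1}-p_{2i-2}$), not only at index $2i-1$, but the algebra is the same as in the paper.
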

\begin{proof}
We start by noting the following recursive relations connecting the various $p_i$ and $q_i$ (see Theorem 1 in \cite{Khinchin}):
$$p_n=c_np_{n-1}-p_{n-2} \quad \quad \text{ and } \quad \quad q_n = c_nq_{n-1} - q_{n-2}$$
Let us set $\mu_{2i} = \frac{p_{2i+1}}{c_{2i+1}\cdot p_{2i-1}}$. To show that $\lambda _{2i} = \mu_{2i}$, it suffices to demonstrate that $\mu_{2i}$ satisfies the recursion relation 
$$\mu_2 = c_2-\frac{1}{c_1} - \frac{1}{c_3}\quad \text{ and } \quad \mu_{2i} = \left(c_{2i}-\frac{1}{c_{2i-1}}-\frac{1}{c_{2i+1}} \right) - \frac{1}{c_{2i-1}^2\cdot \mu_{2i-2}}$$
from Lemma \ref{lemma-auxx-3}. The first of these equations is evident (since $p_1=c_1$ and $p_3=c_1c_2c_3-c_1-c_3$). The second is established using the noted recursive relation for $p_i$:
\begin{align}\nonumber
\mu_{2i} & = \frac{p_{2i+1}}{c_{2i+1}\, p_{2i-1}} =  \frac{c_{2i+1}\, p_{2i}-p_{2i-1}}{c_{2i+1}\, p_{2i-1}} = \frac{p_{2i}}{p_{2i-1}} - \frac{1}{c_{2i+1}} \cr
%%%
& = \frac{c_{2i}\, p_{2i-1}-p_{2i-2}}{p_{2i-1}} -  \frac{1}{c_{2i+1}} \cr
%%%
& = c_{2i} -  \frac{1}{c_{2i+1}} - \frac{p_{2i-2}}{p_{2i-1}} \cr
& =  c_{2i} -  \frac{1}{c_{2i+1}} - \frac{c_{2i-1}\,  p_{2i-2}}{c_{2i-1} \,  p_{2i-1}} \cr
& = c_{2i} -  \frac{1}{c_{2i+1}} - \frac{ p_{2i-1}+p_{2i-3}}{c_{2i-1}  p_{2i-1}} \cr
& = c_{2i} -  \frac{1}{c_{2i+1}} -\frac{1}{c_{2i-1}} - \frac{1}{c_{2i-1}^2 \cdot \frac{p_{2i-1}}{c_{2i-1}\, p_{2i-3}}} \cr
& = c_{2i} -  \frac{1}{c_{2i+1}} -\frac{1}{c_{2i-1}} -\frac{1}{c_{2i-1}^2 \cdot \mu_{2i-2}}
\end{align}
This completes the proof of the lemma.
\end{proof}
Lemmas  \ref{lemma-auxx-1}--\ref{lemma-auxx-4} provide a proof of Theorem \ref{main}. Namely, since the basis 
$$\mathcal F =  \{ f^{1}_1, ..., f^1_{|c_1|-1}\, \, , \,  \, f^3_1,...,f^3_{|c_3|-1}  \, \, , ... , \, \, f^n_1,...,f^n_{|c_n|-1} \, \, , \, f^2, f^4, ..., f^{n-1}  \}
$$
is orthogonal with respect to the \lq\lq inner product\rq\rq $\langle \cdot , \cdot \rangle$ provided by $G$ (according to Lemma \ref{lemma-auxx-3}), it follows that the matrix representing $G$ with respect to the basis $\mathcal F$ is diagonal and its entries are $\{ \langle f^1_1,f^1_1\rangle, ..., \langle f^{n-1},f^{n-1}\rangle \}$. These latter quantities have been computed (Lemma \ref{lemma-auxx-1} and Lemmas \ref{lemma-auxx-3}, \ref{lemma-auxx-4}) and are 
$$\langle f^i_k, f^i_k \rangle = - \eps_i \, \frac{k+1}{k} \quad \quad \text{ and } \quad \quad \langle f^{2j}, f^{2j}\rangle = \frac{p_{2i+1}}{c_{2i+1} \, p_{2i-1}}$$
This proves Theorem \ref{main} after observing that the transition matrix $P$ from the old basis $\mathcal E$ \eqref{old-e-basis} to the new basis $\mathcal F$ \eqref{new-f-basis} of $\mathbb Z^N$, is upper triangular with $\pm 1$ entries on the diagonal. 
%%%%%%%%%%%%%%%%%%%%%%%%%%%%
%%%%%%%%%%%%%%%%%%%%%%%%%%%%
%%%%%%%%%%%%%%%%%%%%%%%%%%%%
%%%%%%%%%%%%%%%%%%%%%%%%%%%%
%%%%%%%%%%%%%%%%%%%%%%%%%%%%
%%%%%%%%%%%%%%%%%%%%%%%%%%%%
%%%%%%%%%%%%%%%%%%%%%%%%%%%%
%%%%%%%%%%%%%%%%%%%%%%%%%%%%
%%%%%%%%%%%%%%%%%%%%%%%%%%%%
%%%%%%%%%%%%%%%%%%%%%%%%%%%%
\section{Proof of Proposition \ref{prop1} and Theorem \ref{main2}} \label{section-small-correction-terms}
%%%
%%%
\subsection{Proof of Proposition \ref{prop1}} Proposition \ref{prop1} is a direct consequence of Theorem \ref{main} and a number of explicit computations of the correction term $\mu$. We shall only outline two cases from the tables from Proposition \ref{prop1}, the other cases follow similarly.

{\em Case of $K_{[c_1,c_2,c_3]}$ with all $c_i$ odd.} Consider the example of $K_{47/14}$ from Figure \ref{pic5}. As the figure shows, these choices of parities render all crossings to be of type II and so $\mu$ becomes $\mu = -c_1+c_2-c_3$, as in line $1$ of the second table from Proposition \ref{prop1}.  
\begin{figure}[htb!] 
\centering
\includegraphics[width=6cm]{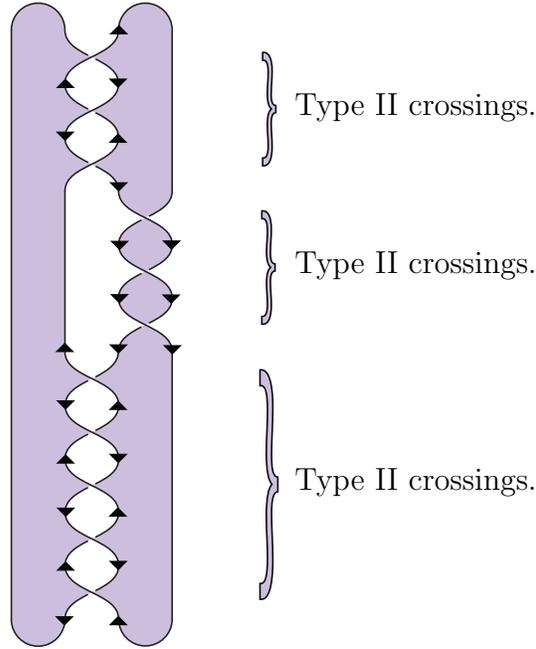}
\put(-50,208){\quad Type II crossings.}
\put(-50,148){\quad Type II crossings.}
\put(-50,66){\quad Type II crossings.}
\caption{The two bridge knot $K_{47/14}$ with $47/14=[3,-3,-5]$. All crossings are type II crossings so their contribution to the correction term $\mu$ is $-c_1+c_2-c_3$. }  \label{pic5}
\end{figure}
\vskip5mm
{\em Case of $K_{[c_1,c_2,c_3,c_4,c_5]}$ with $c_1, c_4$ odd and $c_2,c_3,c_5$ even.} Here we consider the knot $K_{11/3}$ with $11/3=[3,-2,-4,-1,-2]$, as in Figure \ref{pic6}. The figure shows that only the crossings stemming from $c_5$ are of type II and their contribution to $\mu$ is $-c_5$. This establishes the result in line 9 of the third table from Proposition \ref{prop1}.
\begin{figure}[htb!] 
\centering
\includegraphics[width=4cm]{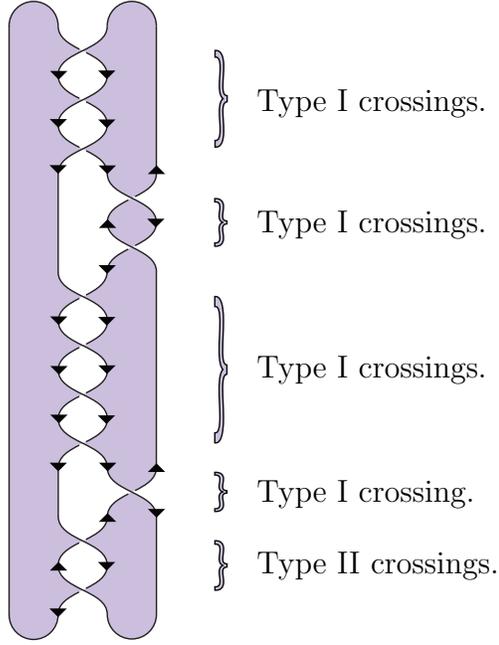}
\put(-20,203){\quad Type I crossings.}
\put(-20,157){\quad Type I crossings.}
\put(-20,102){\quad Type I crossings.}
\put(-20,55){\quad Type I crossing.}
\put(-20,28){\quad Type II crossings.}
\caption{The two bridge knot $K_{11/3}$ with $11/3=[3,-2,-4,-1,-2]$. The first four groups of crossing are of type I and the last one is of type II. Accordingly, $\mu$ equals $-c_5$.}  \label{pic6}
\end{figure}
\subsection{Proof of Theorem \ref{main2}} 
%%%
%%%
Consider a two bridge knot $K_{p/q}$. %Without loss of generality we can assume $q$ to be odd for else we can consider the knot $K_{p/(p+q)}$ which is isotopic to $K_{p/q}$ \cite{Murasugi}. 
To find an even continued fraction expansion for $p/q$, we proceed with the following algorithm in $4$ steps. 
%%%
%%%
\vskip1mm
{\bf Step 1 } 
If $p/q$ is an integer, we simply let $c_1=p/q$ and note that $p/q=[c_1]$ is an even continued fraction expansion of $p/q$. 
%%%
%%%
\vskip1mm
{\bf Step 2 } 
Suppose that $p/q$ is not an integer and that $q>0$. Let $\varepsilon _{-1} = Sign(p)$ and replace $p$ by $|p|$ so that $p/q>0$. For uniformity of notation we introduce the abbreviations $r_{-1} = p $ and $r_0=q$. Write $\varepsilon _{-1} r_{-1}=c_1 r_0 \pm r_1$ with $0\le r_1<r_0$ so that  
$$ \frac{\varepsilon _{-1} r_{-1}}{r_0} = \frac{c_1 r_0 \pm r_1}{r_0} = c_1 \pm  \frac{r_1}{r_0} = c_1 - \frac{1}{\mp r_0/r_1}$$
The sign is chosen so as to make $c_1$ odd and we set $\varepsilon _0 = \mp1$ to keep track of our sign choice. 
%%%
%%%
\vskip1mm
{\bf Step 3 } 
This step inductively repeats Step 2 until the remainder $r_n$ becomes zero. Specifically, in the $n$-th step, having previously found $c_1,...,c_{n}$, $\varepsilon _{-1},...,\varepsilon _{n-1}$ and $r_{-1},...,r_{n-1}$, one writes $\varepsilon _{n-2} r_{n-2}$ as 
$$\varepsilon _{n-2} r_{n-2} = c_{n} r_{n-1} \pm r_{n}\quad \quad \text{ with } \quad \quad 0\le r_{n} < r_{n-1}$$
and with the sign $\varepsilon _n=\mp 1$ chosen so that $c_{n}$ is even (this can be done so long as $r_n\ne0$). 
%when $n$ is even (when $n$ is odd, we can choose either sign). 
Since the sequence $r_0,r_1, r_2,...$ is a strictly decreasing sequence of non-negative integers, this process eventually yields $r_n=0$ at which point we have produced a continued fraction expansion 
$$p/q = [c_1,...,c_n]$$
with $c_1$ odd and $c_{i}$ even for $i\ge 2$ with the possible exception of $c_n$. We note that $c_i\ne 0$ for all $i$ since $r_{i-2}<r_{i-1}$ for $i\ge 2$ and since $c_1$ was chosen to be odd. 
%%%
%%%
%\vskip1mm
%{\bf Step 4 } 
%If the continued fraction expansion from Step $3$ contains zero entries, we eliminate those simply by replacing $[c_1,...,c_{i-1},0,c_{i+1},...c_n]$ with $[c_1,...,c_{i-1}+c_{i+1},...c_n]$. After finitely many such replacements, we arrive at a continued fraction expansion of $p/q$, still denoted by $[c_1,...,c_n]$, with $c_1$ odd, $c_i$ even and nonzero for all $2\le i\le n-1$. The coefficient $c_n$ is also nonzero but may be odd. 
%%%
%%%
\vskip1mm
{\bf Step 4 } We consider the continued fraction expansion $[c_1,...,c_n]$ from Step 3.  If $n$ is odd, this continued fraction expansion is even and we are done. If $n$ is even and $c_n$ is odd, we change to the continued fraction expansion $[c_1,...,c_n\pm 1, \pm1]$ of $p/q$ which is even, and we are again done. Finally, if $n$ is even and $c_n$ is even, we consider the even continued fraction expansion 
$$ [1,1+c_1,c_2,...,c_n] = \frac{p}{p+q}$$
Since $K_{p/q}$ and $K_{p/(p+q)}$ are isotopic \cite{Murasugi}, we are done. 

\begin{example} 
We illustrate the above algorithm for $p/q = 137/37$. Note that Step 1 is omitted since $137/37$ is non-integral.

\noindent {\bf Step 2 } From $r_{-1}=137$, $\varepsilon_{-1}=1$ and $r_0=37$, we obtain $c_1=3$, $r_1=26$ and $\varepsilon _0=-1$. \\
%%%
%%%
{\bf Step 3 } Repeating the inductive Step $3$ four times, yields the table:
$$\begin{array}{c|c|c|c}
n & c_n & \varepsilon _{n-1} & r_n \cr \hline \hline 
2 & -2  & -1 & 15 \cr \hline  
3 & -2  & -1 & 4 \cr \hline
4 & -4  & -1 & 1 \cr \hline  
5 & -4  & 1 & 0
\end{array}
$$
From this one finds $137/37 = [3,-2,-2,-4,-4]$.\\
%%%
%%%
{\bf Step 4 } No further action is  required since the continued fraction $[3,-2,-2,-4,-4]$ is of odd length.
\end{example}

To finish the proof of Theorem \ref{main2}, we note that the black regions in the diagram $D_{[c_1,...,c_n]}$ (as in Figure \ref{pic4}) form an orientable Seifert surface for the knot $K_{[c_1,...,c_n]}$ whenever $[c_1,...,c_n]$ is an even continued fraction expansion. See Figure \ref{pic7} for an example. However, it is pointed out in \cite{GordonLitherland} that when this happens, the correction term $\mu$ vanishes so that the signature of the knot and its Goeritz matrix agree. This completes the proof of Theorem \ref{main2}.
\begin{figure}[htb!] 
\centering
\includegraphics[width=15cm]{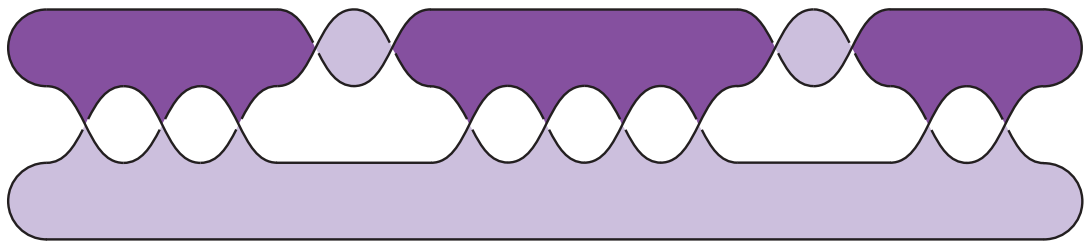}
\caption{This is the diagram $D_{[3,-2,-4,-2,-2]}$ of the two bridge knot $K_{61/17}$ associated to the even continued fraction expansion $61/17=[3,-2,-4,-2,-2]$. The oriented Seifert surface of $K_{61/17}$ formed by the black regions of the checkerboard pattern is clearly visible.}  \label{pic7}
\end{figure}
%
%
%
%%%%%%%%%%%%%%%
%%%%%%%%%%%%%%%

\end{document}